\documentclass[10pt]{amsart}

\title[Modular abelian surfaces of small conductor with nontrivial $\Sha$]{Modular abelian surfaces of small conductor with nontrivial Tate--Shafarevich groups}

\author{Sam Frengley} 
\address{Inria and Laboratoire d'Informatique de l'École polytechnique, CNRS, Institut Polytechnique de Paris, Palaiseau, France}
\email{samuel.frengley@inria.fr}

\author{Dylan Laird} 
\address{École Normale Supérieure de Lyon, Lyon, France}
\email{dylan.laird@ens-lyon.fr}

\date{July 23, 2026}

\usepackage{colonequals}	
\usepackage{amsmath}		
\usepackage{amssymb}		
\usepackage{amsthm}			
\usepackage{amsfonts}	
\usepackage{thmtools}
\usepackage{enumitem}	
\usepackage[pagebackref=true,hypertexnames=false]{hyperref}

\def\MR#1{\href{http://www.ams.org/mathscinet-getitem?mr=#1}{MR#1}}

\usepackage{mathrsfs}
\usepackage{caption}
\usepackage{mathtools}
\usepackage{array,booktabs}
\usepackage{color}
\usepackage{subcaption}
\usepackage{float}
\usepackage[utf8]{inputenc}
\usepackage{pdflscape}

\usepackage{cleveref}

\usepackage{tikz}
\usepackage{tikz-cd}

\usepackage{array}

\usepackage{longtable}
\usepackage{makecell}
\usepackage{adjustbox}
\usepackage{multirow}

\renewcommand{\arraystretch}{1.2}
\usepackage{chngcntr}
\counterwithin{table}{section}

\usepackage{placeins}

\DeclareFontFamily{U}{wncy}{}
\DeclareFontShape{U}{wncy}{m}{n}{<->wncyr10}{}
\DeclareSymbolFont{mcy}{U}{wncy}{m}{n}
\DeclareMathSymbol{\Sha}{\mathord}{mcy}{"58} 

\newcommand{\CC}{\mathbb{C}}
\newcommand{\FF}{\mathbb{F}}

\newcommand{\QQ}{\mathbb{Q}}

\newcommand{\ZZ}{\mathbb{Z}}
\newcommand{\cO}{\mathcal{O}}
\newcommand{\cL}{\mathcal{L}}

\newcommand{\fq}{\mathfrak{q}}
\newcommand{\fp}{\mathfrak{p}}
\newcommand{\fm}{\mathfrak{m}}

\newcommand{\overbar}[1]{\mkern 1.5mu\overline{\mkern-1.5mu#1\mkern-1.5mu}\mkern 1.5mu}

\newcommand{\LMFDBLabelMF}[1]{%
\textnormal{\texttt{\href{https://www.lmfdb.org/ModularForm/GL2/Q/holomorphic/#1}{{#1}}}}}

\DeclareMathOperator{\rank}{rk}

\DeclareMathOperator{\Frob}{Frob}
\DeclareMathOperator{\Gal}{Gal}
\DeclareMathOperator{\End}{End}

\DeclareMathOperator{\Aut}{Aut}

\DeclareMathOperator{\tr}{tr}
\DeclareMathOperator{\GL}{GL}
\DeclareMathOperator{\SL}{SL}

\DeclareMathOperator{\Vis}{Vis}
\DeclareMathOperator{\Nm}{Nm}

\newcommand{\bigp}{4000}

\usepackage{algorithm}
\usepackage{algorithmic}

\makeatletter 
 
\@addtoreset{algorithm}{section} 
\makeatother

\newtheorem{theorem}[algorithm]{Theorem}
\newtheorem{prop}[algorithm]{Proposition}
  
\newtheorem{lemma}[algorithm]{Lemma}
\newtheorem{conj}[algorithm]{Conjecture}
\theoremstyle{definition}\newtheorem{defn}[algorithm]{Definition}
\theoremstyle{definition}\newtheorem{remark}[algorithm]{Remark}
\theoremstyle{definition}
\theoremstyle{definition}
\theoremstyle{definition}\newtheorem{question}[algorithm]{Question}
\theoremstyle{definition}

\newcounter{filters}
\newcommand{\filter}[1]{%
\vspace{2mm}\refstepcounter{filters}\noindent\underline{Filter {\thefilters} (#1):}
}

\begin{document}

\begin{abstract}
We exhibit examples of geometrically simple abelian surfaces $A/\QQ$ with conductor bounded by $(10\,000)^2$ whose Tate--Shafarevich groups contain a subgroup isomorphic to $(\mathbb{Z}/p\mathbb{Z})^2$ for each $p = 5, 7, 11, 13$. To find these examples we generalise work of Cremona--Freitas to give a candidate list of all congruences of a certain type between pairs of weight $2$ newforms $f \in S_2^{\textnormal{new}}(\Gamma_0(N))$ and $g \in S_2^{\textnormal{new}}(\Gamma_0(M))$ contained in the LMFDB (i.e., with $N, M \leq 10\,000$) and with coefficient fields of degree $\leq 4$.  Passing from the modular forms to the corresponding abelian varieties we use visibility to (unconditionally) prove the existence of non-trivial elements of the Tate--Shafarevich group. Finally we construct an example of an abelian surface with $(\mathbb{Z}/7\mathbb{Z})^2 \subset \Sha(A/\mathbb{Q})$ which is (conjecturally) not visible in any abelian threefold. 
\end{abstract}

\maketitle

\vspace{-5mm}
\section{Introduction}
Let $K$ be a number field and let $G_K = \Gal(\overbar{K}/K)$ be the absolute Galois group of $K$. Let $A/K$ be an abelian variety and let $\Sha(A/K) \subset H^1(K, A)$ be the Tate--Shafarevich group of $A$, i.e., the subgroup 
\[ \Sha(A/K) =  \ker \left(H^1(K, A) \to \prod_v H^1(K_v, A) \right).\]
In this article we prove the following theorem. 

\begin{theorem}
    \label{thm:examples}
    Let $(A, p)$ be any of the pairs in Table~\ref{table:examples} where $A/\QQ$ is an abelian surface and $p$ is a prime number.
    Then $A/\QQ$ is geometrically simple and the Tate--Shafarevich group $\Sha(A/\QQ)$ contains a subgroup isomorphic to $(\ZZ/p\ZZ)^2$.
\end{theorem}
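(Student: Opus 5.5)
We prove the theorem by visibility in the sense of Mazur and Agashe--Stein, applied separately to each row of Table~\ref{table:examples}. Each row comes from a congruence modulo a prime $\mathfrak{p}\mid p$ between a newform $f\in S_2^{\mathrm{new}}(\Gamma_0(N))$ with quadratic coefficient field and a newform $g\in S_2^{\mathrm{new}}(\Gamma_0(M))$. Here $A = A_f$ is the abelian surface attached to $f$, and $B$ is the abelian variety attached to $g$. We assume $A_g$ has positive analytic rank and $A_f$ has analytic rank zero, and we allow $B$ to be $A_g$ or a suitable product or twist of it. Since $A_f$ has analytic rank zero, Kolyvagin--Logachev together with Kato gives that $A(\QQ)$ is finite. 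Meanwhile $B(\QQ)$ has rank at least $2$, either by Gross--Zagier or by exhibiting explicit independent points.

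\textbf{Geometric simplicity.} Because $N$ is squarefree (or at least $f$ has no inner twists, which we check from the LMFDB data), $\End_{\overline{\QQ}}(A)\otimes\QQ$ equals the real quadratic coefficient field $K_f$. Ribet's results then show that $A$ is not $\overline{\QQ}$-isogenous to a product of elliptic curves or to the square of a CM curve. Concretely, we would verify that $f$ is not CM and has no nontrivial inner twists, for instance by checking that some $a_\ell(f)^2$ is not a rational multiple of a character value.

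\textbf{Constructing the shared module.} Let $C$ be the abelian variety associated with the combined old/new space of level $\operatorname{lcm}(N,M)$, or more simply $A\times B$. The steps are as follows.
\begin{enumerate}
\item Using the Sturm bound, prove that $f\equiv g \pmod{\mathfrak{p}}$ for all Hecke eigenvalues (after adjusting at primes dividing $NM$). This gives an isomorphism of $G_\QQ$-modules $A[\mathfrak{p}]\cong B[\mathfrak{p}]$ of dimension $2$ over $\FF_p$. The isomorphism follows from Brauer--Nesbitt together with the irreducibility of the residual representation, which we check via the image-of-Galois data or by finding a prime $\ell$ whose Frobenius has irreducible characteristic polynomial mod $\mathfrak{p}$.
\item Form $C=(A\times B)/\Delta$, where $\Delta$ is the graph of this isomorphism. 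Then $A\to C$ is injective and has image meeting $B$ in $A[\mathfrak{p}]$.
\item Apply the Agashe--Stein visibility theorem: if $p\nmid 2\cdot\#A(\QQ)_{\mathrm{tors}}\cdot\#B(\QQ)_{\mathrm{tors}}\cdot\prod_\ell c_\ell(A)c_\ell(B)$, and $p$ is unramified and of good reduction where needed (i.e.\ $p\nmid NM$, or the weaker condition $e<p-1$), then $B(\QQ)/\mathfrak{p}B(\QQ)\hookrightarrow \Vis_C\Sha(A/\QQ)$.
\end{enumerate}
Since $B(\QQ)/\mathfrak{p}B(\QQ)$ has $\FF_p$-dimension at least $2$ once $\rank B(\QQ)\ge 2$, this gives $(\ZZ/p\ZZ)^2\subset\Sha(A/\QQ)$.

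\textbf{Main obstacle.} The main obstacle is verifying the hypotheses of the visibility theorem unconditionally. These are the Tamagawa numbers $c_\ell$, which must be computed from models or from the component groups of $J_0(N)$ via Conrad--Stein or modular-symbol methods, and the torsion bounds. A second difficulty is proving that the Mordell--Weil rank of $B$ really is at least $2$ rather than relying on analytic rank. For ranks we would use analytic rank one with $[K_g:\QQ]=2$ and Gross--Zagier--Kolyvagin, which yields rank $2$. For Tamagawa numbers we would bound them at primes $\ell\| N$ using Ribet's level-lowering: the congruence itself can force $p\mid c_\ell$ precisely when $\ell\mid N$ but $\ell\nmid M$. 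We would therefore check the Ribet condition $a_\ell(g)\not\equiv\pm(\ell+1)\pmod{\mathfrak{p}}$ at each such $\ell$ to exclude this.
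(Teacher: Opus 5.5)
Your outline is essentially the paper's: prove $A[\fp]\cong B[\fq]$ by a Sturm bound plus a Frobenius with irreducible characteristic polynomial, then apply the Agashe--Stein/Fisher criterion with $\rank A=0$. However, two load-bearing steps fail as you set them up.

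\textbf{The rank count.} The claim that ``$B(\QQ)/\fq B(\QQ)$ has $\FF_p$-dimension at least $2$ once $\rank B(\QQ)\ge 2$'' is false for an RM surface $B$. Here $B(\QQ)$ is an $\cO_B$-module. So if $B(\QQ)[p]=0$ and $\cO_B/\fq\cong\FF_p$ (your own setting, with $A[\fp]$ two-dimensional over $\FF_p$), then $B(\QQ)/\fq B(\QQ)\cong\FF_p^{\rank B/2}$. Your planned source of rank (analytic rank one for quadratic $K_g$ plus Gross--Zagier--Kolyvagin) would therefore give only $\ZZ/p\ZZ\subset\Sha(A/\QQ)$. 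What is needed, and what holds in every row, is $\rank B=2\dim B$. Nine of the partners are rank-$2$ elliptic curves, where $\fq=(p)$. For \texttt{9603.2.a.o} and \texttt{9025.2.a.r} the partner is an RM surface of rank $4$ with $p$ split. In every case $\operatorname{ord}_{s=1}L(g,s)\ge 2$, since otherwise Gross--Zagier--Kolyvagin would force $\rank B\le\dim B$. So the lower bound must come from explicitly exhibited independent points, as in the paper. Only $\rank A=0$ needs an upper bound, and Kolyvagin--Logachev with a certified $L(f,1)\neq 0$ is a fine substitute for the paper's $2$-descent.

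\textbf{The Tamagawa numbers.} Your Tamagawa strategy certifies nothing. In every row $N=M$, so there is no $\ell$ with $\ell\mid N$ and $\ell\nmid M$, and the test $a_\ell(g)\not\equiv\pm(\ell+1)$ is vacuous. Where such $\ell$ do exist, the congruence itself forces $a_\ell(g)\equiv\pm(\ell+1)$, so the test could never pass. At $\ell\parallel N$ the reduction is toric, and $p$ divides the geometric component group order $\overbar{c}_\ell(A)$ exactly when $A[\fp']$ is unramified at $\ell$ for some $\fp'\mid p$. This includes the conjugate prime $\overbar{\fp}$, about which $g$ says nothing. So it must be settled by actual computation: regular models, \texttt{genus2reduction}, cluster pictures, or Kohel--Stein transported along a prime-to-$p$ isogeny. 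At $\ell^2\mid N$ you need Lenstra--Oort, which applies only for $p>2\dim A+1=5$, so the three $p=5$ surfaces need explicit regular-model computations. The criterion also involves $c(B')$ with $B'=B/B[\fq]$, not $c(B)$; the two agree here only because $\fq$ is principal.

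\textbf{Smaller points.} No level in the table is squarefree, so geometric simplicity must rest on your no-inner-twist/non-CM check. That check is a valid alternative to the paper's Cremona--Flynn/Stoll criterion. Also, the theorem concerns the specific Jacobians of Table~\ref{tab:weier-eqns}. You must show that these carry $\cO_f$ over $\QQ$ and lie in the isogeny class of $f$ (Lemmas~\ref{lemma:have-RM} and~\ref{lemma:correct-eqns-forms}), because $(\ZZ/p\ZZ)^2\subset\Sha$ is not preserved under isogenies of degree divisible by $p$.
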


\begin{table}[ht]
  \centering
  \begin{tabular}{ccc||ccc}
    $p$  & Label of $f_A$                        & $K_A$             &      $p$ & Label of $f_A$                         & $K_A$             \\
    \hline
    $5$  & \LMFDBLabelMF{4968.2.a.bf} & $\QQ(\sqrt{41})$  &  $7$   & \LMFDBLabelMF{8475.2.a.r}  & $\QQ(\sqrt{2})$  \\
    $5$  & \LMFDBLabelMF{5056.2.a.ba} & $\QQ(\sqrt{6})$  & $7$   & \LMFDBLabelMF{8976.2.a.bm} & $\QQ(\sqrt{2})$\\
    $5$  & \LMFDBLabelMF{8784.2.a.bj} & $\QQ(\sqrt{6})$  &  $7$   & \LMFDBLabelMF{9603.2.a.o}  & $\QQ(\sqrt{2})$  \\
    $7$   & \LMFDBLabelMF{3200.2.a.bm} & $\QQ(\sqrt{2})$  &  $11$  &\LMFDBLabelMF{9025.2.a.r}  & $\QQ(\sqrt{5})$ \\
     $7$   & \LMFDBLabelMF{5220.2.a.r}  & $\QQ(\sqrt{57})$  &  $13$  & \LMFDBLabelMF{6776.2.a.r}  & $\QQ(\sqrt{3})$  \\
     $7$   & \LMFDBLabelMF{8464.2.a.bg} & $\QQ(\sqrt{2})$ &  && \\
  \end{tabular}
  \caption{Examples of geometrically simple abelian surfaces $A/\QQ$ with real multiplication by an order in the field $K_A$ and with $(\ZZ/p\ZZ)^2 \subset \Sha(A/\QQ)$. The abelian surface $A/\QQ$ is the Jacobian of the genus $2$ curve given in \Cref{tab:weier-eqns} whose isogeny class corresponds to the (Galois orbit of) weight $2$ newforms $f_A$ from the LMFDB~\cite{LMFDB}.}
  \label{table:examples}
\end{table}

To our knowledge these are the first examples of geometrically simple abelian surfaces with non-trivial $11$- and $13$-torsion in their Tate--Shafarevich groups. Works of Bruin--Flynn--Testa~\cite{BFT_DV33IOJOG2C} and Flynn~\cite{F_DV55IOJOG2C} provide examples of geometrically simple abelian surfaces $A/\QQ$ with $(\ZZ/p\ZZ)^2 \subset \Sha(A/\QQ)$ for each $p = 3, 5$ using $(p,p)$-descent. In \cite[Table~1]{AS_VEFTBSDCFMAVOAR0} Agashe and Stein used visibility in modular Jacobians to give many examples of abelian varieties (often of very large dimension) with real multiplication and nontrivial $\Sha(A/\QQ)[p]$ (including an abelian surface $A/\QQ$ with $(\ZZ/3\ZZ)^2 \subset \Sha(A/\QQ)$ in the isogeny class corresponding to the newform \LMFDBLabelMF{571.2.a.c}). In \cite[Theorem~1]{F_E7TITSGOG2J} (cf. \cite[Appendix~A]{KS_CVOSBSDFMMASOQ}) the first named author found examples of geometrically simple abelian surfaces $A/\QQ$ with $(\ZZ/7\ZZ)^2 \subset \Sha(A/\QQ)$ by leveraging visibility in an abelian threefold (in particular, the example \LMFDBLabelMF{3200.2.a.bm} in \Cref{thm:examples} is given in \cite{F_E7TITSGOG2J,KS_CVOSBSDFMMASOQ}). Our approach is closely related to \cite{AS_VEFTBSDCFMAVOAR0}, \cite[Section~5]{F_VEOO7ITTSGOAEC}, and \cite{F_E7TITSGOG2J}.

\begin{remark}
  \label{remark:galois-rep}
  In the works \cite{AS_VEFTBSDCFMAVOAR0}, \cite{BFT_DV33IOJOG2C}, \cite{F_DV55IOJOG2C}, and \cite{F_E7TITSGOG2J} the examples provided have mod $p$ Galois representations which do not surject onto $\operatorname{GSp}_{2g}(\FF_p)$ and the same is true for the examples in \Cref{thm:examples} since the abelian surfaces have real multiplication (this will play a key role). It would be interesting to construct examples with surjective mod $p$ Galois representations.
\end{remark}

Let $A/\QQ$ and $B/\QQ$ be abelian varieties and let $\Delta / \QQ$ be a finite group scheme. Suppose that there exist embeddings $\Delta \hookrightarrow A$ and $\Delta \hookrightarrow B$. Consider the abelian variety $Z = (A \times B) / \Delta$ (where $\Delta$ is embedded diagonally in $A \times B$). Writing $\iota \colon A \to Z$ for the natural morphism, we define the \emph{visible subgroup}
\[
  \Vis_Z H^1(\QQ, A) = \ker( H^1(\QQ, A) \xrightarrow{\iota_*} H^1(\QQ, Z) )
\]
and accordingly $\Vis_Z \Sha(A/\QQ) = \Sha(A/\QQ) \cap \Vis_Z H^1(\QQ, A)$. We say that an element $\xi \in \Sha(A/\QQ)$ is \emph{visible in $Z$} if $\xi \in \Vis_Z \Sha(A/\QQ)$.

If $A/\QQ$ is an abelian variety we write $\End_\QQ(A)$ for the ring of endomorphisms of $A$ defined over $\QQ$. If $\fp \subset \End_\QQ(A)$ is an ideal in a subring of $\End_\QQ(A)$ we write $A[\fp] \subset A$ for the subgroup annihilated by $\fp$.

\begin{defn}
    Let $A/\QQ$ and $B/\QQ$ be simple abelian varieties and let $\fp \subset \End_\QQ(A)$ and $\fq \subset \End_\QQ(B)$ be prime ideals. We say that $A$ and $B$ are \emph{$(\fp, \fq)$-congruent} if there exists an isomorphism $A[\fp] \cong B[\fq]$ of $G_\QQ$-modules.    

    We say that an element $\xi \in \Sha(A/\QQ)$ is \emph{made visible by the $(\fp, \fq)$-congruence $\psi \colon A[\fp] \cong B[\fq]$} if $\xi$ is visible in $Z = (A \times B) / \Delta$ where $\Delta = \{(x, \psi(x)) : x \in A[\fp]\}$ is the graph of the isomorphism $\psi$.
\end{defn}

We deduce \Cref{thm:examples} from the following more precise result.

\begin{theorem}
    \label{thm:examples-vis}
    Let $(A,B,p)$ be any of the tuples in \Cref{tab:full-examples} where $A/\QQ$ and $B/\QQ$ are abelian varieties and $p$ is a prime number. If $A$ or $B$ is marked with either a dagger $(\dagger)$ or an asterisk $(*)$ in \Cref{tab:full-examples} assume that all of its Tamagawa numbers are coprime to $p$. Then there exist prime ideals $\fp \subset \End_\QQ(A)$ and $\fq \subset \End_\QQ(B)$ such that the Tate--Shafarevich group $\Sha(A/\QQ)[\fp]$ contains a subgroup isomorphic to $(\ZZ/p\ZZ)^2$ which is made visible by a $(\fp,\fq)$-congruence $A[\fp] \cong B[\fq]$.
\end{theorem}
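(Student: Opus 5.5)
The plan is to apply the standard visibility theorem of Agashe--Stein (Cremona--Mazur visibility), in the refined form adapted to an ideal $\fp$ in the real multiplication order. Concretely, for each tuple $(A,B,p)$ in \Cref{tab:full-examples}, I would use the following version. Let $\psi\colon A[\fp]\cong B[\fq]$ be a $G_\QQ$-isomorphism and set $Z=(A\times B)/\Delta$. Suppose $p$ is odd and coprime to the conductor of $B$. Suppose also that $p \nmid c_\ell(A)\,c_\ell(B)$ for all primes $\ell$, that $B(\QQ)$ and $A(\QQ)$ have no $p$-torsion, and that $\rank B(\QQ)>0$ while $A$ has rank $0$. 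Then the image of $B(\QQ)/\fq B(\QQ)$ under the connecting map $B(\QQ)/\fq B(\QQ)\to H^1(\QQ,B[\fq])\cong H^1(\QQ,A[\fp])\to H^1(\QQ,A)$ lies in $\Vis_Z\Sha(A/\QQ)[\fp]$, and it has the same size as $B(\QQ)/\fq B(\QQ)$.

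The first step is to establish the congruence itself. The abelian varieties $A$ and $B$ are modular, attached to newforms $f_A$ and $f_B$. A $(\fp,\fq)$-congruence of semisimple residual representations follows from a Sturm-bound check that $a_\ell(f_A)\equiv a_\ell(g)\pmod{\fp}$ for $\ell$ up to the Sturm bound of the common level (with the usual adjustment at primes dividing $NM$). To upgrade this to an isomorphism of $G_\QQ$-modules $A[\fp]\cong B[\fq]$, I would show the residual representation is irreducible, for instance by exhibiting Frobenius traces incompatible with reducibility. Irreducibility then gives the isomorphism by Brauer--Nesbitt together with the fact that $A[\fp]$ is $2$-dimensional over $\cO/\fp=\FF_p$. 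Here $p$ should split or ramify in $K_A$ so that $\cO/\fp\cong\FF_p$.

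Next I would verify the local hypotheses. I would compute the Tamagawa numbers of $A$ and $B$ from the genus $2$ models, or via Néron models and component groups, and check that they are coprime to $p$. For entries marked $\dagger$ or $*$, where this computation is unavailable, it is exactly the stated assumption. I would also check good reduction of $B$ at $p$ and that $e<p-1$.

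Then I would control the global part. I would show $\rank B(\QQ)=2\dim_{\FF_p}$-appropriate, so that $B(\QQ)/\fq B(\QQ)\cong(\ZZ/p\ZZ)^2$. For instance, if $B$ is an elliptic curve of rank $2$, or a surface with RM whose analytic rank gives Mordell--Weil rank $2$ (equivalently rank $1$ over the RM order), then $B(\QQ)\otimes\cO_B/\fq$ has $\FF_p$-dimension $2$. I would prove this using explicit independent points plus an upper bound from descent or from the BSD-compatible analytic rank via Gross--Zagier--Kolyvagin, and check that $B(\QQ)[p]=0$. I would also show $A$ has rank $0$ (Kolyvagin--Logachev, since $L(f_A,1)\ne0$), so that the visible classes are not killed by $A(\QQ)$-images and the map to $\Sha(A)$ is injective. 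Finally, I would check that everything lands in $\Sha(A/\QQ)[\fp]$.

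The main obstacle is verifying the hypotheses uniformly: the provable rank computations for $B$, the irreducibility needed to get an honest isomorphism $A[\fp]\cong B[\fq]$, and the Tamagawa numbers in the non-marked cases. This would be done table entry by table entry by computer (Magma), so I would first set up the Sturm-bound congruence check and the rank and Tamagawa computations.
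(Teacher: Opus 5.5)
Your overall route is the paper's: prove $A[\fp]\cong B[\fq]$ by the Sturm-bound criterion of \Cref{prop:sturm-cong} plus irreducibility (as in \Cref{thm:congruence-classification-av}), check torsion, ranks and Tamagawa numbers, then apply \Cref{thm:vis-crit}. However, the step that is supposed to produce $(\ZZ/p\ZZ)^2$ is wrong as you wrote it. In every tuple $\fq$ has residue degree $1$. So for an RM surface $B$ with $\rank B(\QQ)=2$, i.e.\ $\cO_B$-rank $1$, you get $B(\QQ)/\fq B(\QQ)\cong\cO_B/\fq\cong\ZZ/p\ZZ$, which is only one copy. What you need, and what holds in \Cref{tab:full-examples}, is $\rank B(\QQ)=2\dim B$, i.e.\ $\cO_B$-rank $2$, together with $B(\QQ)[p]=0$. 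Concretely this means rank $2$ for the elliptic curves $B$ and rank $4$ for the surfaces \LMFDBLabelMF{9510.2.a.x}, \LMFDBLabelMF{9603.2.a.m}, \LMFDBLabelMF{6962.2.a.o}, \LMFDBLabelMF{9025.2.a.n}. Since $f_B$ then has analytic rank $2$, Gross--Zagier--Kolyvagin gives no upper bound. Only your descent option works; the paper uses a $2$-descent plus a point search.

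Second, your visibility criterion is not right for a general $\fq$. The connecting map lives on $B'(\QQ)/\psi(B(\QQ))$ with $B'=B/B[\fq]$, and the local argument at $\ell\neq p$ needs $c_\ell(B')$, not $c_\ell(B)$, to be prime to $p$ (see \Cref{rmk:B-prime}). These can differ across a $p$-isogeny: for example $c_{11}=5$ for $X_0(11)$ but $c_{11}=1$ for $X_1(11)$. This is harmless here only because $\fq$ is principal: $\fq=(p)$ for elliptic $B$, and $\QQ(\sqrt2)$, $\QQ(\sqrt5)$ have class number one. Hence $\psi$ is multiplication by a generator and $B'\cong B$. You should say this explicitly; it is exactly why the paper filters for principal $\fq$.

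Smaller points:
\begin{itemize}
\item $A$, not just $B$, must have good reduction at $p$ (automatic here since $N=M$).
\item The hypothesis actually needed on $A$ is $A'(\QQ)/\phi(A(\QQ))=0$, which follows from rank $0$ and $A'(\QQ)[p]=A(\QQ)[p]=0$.
\item For the entries marked $(\star)$ in \Cref{tab:weier-eqns}, the level of the Jacobian is not rigorously known, so a Sturm-bound check is not available. There the paper proves $A[\fp]\cong B[\fq]$ by a direct method instead.
\end{itemize}
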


\begin{remark}
  In those cases of \Cref{thm:examples-vis} marked with a $(\dagger)$ the Tamagawa numbers of $A$ and $B$ are coprime to $p$ according to calculations using the unpublished work~\cite{DJ_RMOC}.
\end{remark}

For any abelian variety $A/\QQ$ and each element $\xi \in \Sha(A/\QQ)$ there exists \emph{some} abelian variety $Z/\QQ$ such that $\xi \in \Vis_Z \Sha(A/\QQ)$ (see \cite[p.5 Proposition]{CM_VEITSTG} or \cite[Proposition~2.3]{AS_VOSTGOAV}). It is therefore natural to ask for the \emph{visibility dimension} of $\xi \in \Sha(A/\QQ)$ which is defined to be the minimum dimension of an abelian variety $Z/\QQ$ visualising $\xi$.

To this end, we draw the reader's attention to the abelian varieties with labels \LMFDBLabelMF{9603.2.a.o}, \LMFDBLabelMF{6962.2.a.q} and \LMFDBLabelMF{9025.2.a.r} in \Cref{tab:full-examples}. In each of these cases $A/\QQ$ is an abelian surface and the subgroup $(\ZZ/p\ZZ)^2 \subset \Sha(A/\QQ)$ is made visible by a congruence with an abelian surface $B/\QQ$. 

\begin{conj}
    \label{conj:invisible-7}
    The Tate--Shafarevich group of the abelian surface $A/\QQ$ with label \LMFDBLabelMF{9603.2.a.o} contains an element of order $7$ which is not visible in any abelian threefold, in particular its visibility dimension is $4$.
\end{conj}

\Cref{conj:invisible-7} follows if one can determine the rational points on a pair of explicit plane quartic curves (given in \Cref{sec:invis}).

\begin{question}
    \label{qn:invisible}
    Can the elements of order $11$ in the Tate--Shafarevich groups of the abelian surfaces with labels \LMFDBLabelMF{6962.2.a.q} and \LMFDBLabelMF{9025.2.a.r} be visualised in an abelian threefold?
\end{question}

\subsection{Congruences of weight \texorpdfstring{$2$}{2} newforms}
\label{sec:congs-wt-2}

The key technical input in this article is to find several examples of $(\fp,\fq)$-congruent abelian varieties over $\QQ$. Testing each of these examples against the criterion in \cite[Theorem~2.2]{F_VEOO7ITTSGOAEC} (see \Cref{thm:vis-crit}) yields \Cref{thm:examples,thm:examples-vis}. To construct examples of congruences, we exploit the correspondence between abelian varieties with real multiplication and weight $2$ newforms. We then utilise the LMFDB~\cite{LMFDB} to find congruences of newforms. 

Let $\mathcal{O}$ be an order in a totally real number field $K$ of degree $d$. By work of Shimura~\cite{S_OTFOTJVOAMFF} we may associate to a Galois orbit of newforms $f \in S_2^{\textnormal{new}}(\Gamma_0(N))$ with coefficient ring $\mathcal{O}$ (and trivial nebuntypus) an isogeny class of abelian varieties $A/\QQ$ of dimension $d$, conductor $N^d$, and with $\End_\QQ(A) \cong \cO$. 

Let $A/\QQ$ be such an abelian variety associated to a Galois orbit of newforms $f \in S_2^{\textnormal{new}}(\Gamma_0(N))$. Let $p$ be a prime number coprime to $N$ and unramified in $\cO$. Following \cite[Section~2]{R_GAODPOAVWRM} we have 
\[V_p(A) \cong \bigoplus_\fp V_\fp(A)\]
where $V_p(A) = T_p(A) \otimes_{\ZZ_p} \QQ_p$ is the $p$-adic Tate module, $V_\fp(A) = V_p(A) \otimes_{K \otimes \QQ_p} K_\fp$, and the direct sum ranges over those prime ideals $\fp$ of $K$ lying above $p$. 
In particular, by compactness and continuity, after choosing an $\cO_\fp$-lattice spanning $V_\fp(A)$ we obtain Galois representations $\rho_{f,\fp} \colon G_{\QQ} \to \GL_2(\mathcal{O}_\fp)$. 
For each prime $\ell \nmid N p$ the Galois representation $\rho_{f,\fp}$ is unramified at $\ell$ and we have $a_\ell(f) = \tr \rho_{f,\fp}(\Frob_\ell)$. 

For a prime $\fp$ of $\mathcal{O}$ we write $\FF_\fp = \mathcal{O}/\fp$ for the residue field at $\fp$ and let $\bar{\rho}_{f,\fp} \colon G_\QQ \to \GL_2(\FF_\fp)$ be the representation obtained by reducing $\rho_{f,\fp}$ modulo $\fp$. The semisimplification of the Galois representation $\bar{\rho}_{f,\fp}$ does not depend on the choice of $\cO_\fp$-lattice and is isomorphic to the semisimplification of the Galois representation $\bar{\rho}_{A,\fp}$ obtained from the action of $G_\QQ$ on $A[\fp]$. 

For a newform $f \in S_2^{\textnormal{new}}(\Gamma_0(N))$ we write $\cO_f$ for the coefficient ring of $f$ and we write $K_f = \cO_f \otimes \QQ$.

\begin{defn}
  Let $f \in S_2^{\textnormal{new}}(\Gamma_0(N))$ and $g \in S_2^{\textnormal{new}}(\Gamma_0(M))$ be newforms. We say that $f$ and $g$ are \emph{$p$-congruent} if there exists a field $L/\QQ$, a prime ideal $\fm \subset \cO_L$ dividing $p$, and embeddings $K_f \hookrightarrow L$ and $K_g \hookrightarrow L$ such that $a_\ell(f) - a_\ell(g) \in \fm$ for all $\ell \nmid pNM$.

  We will further say that $f$ and $g$ are \emph{$p$-congruent modulo an unramified prime} if $L$ and $\fm$ may be chosen so that the ramification index $e(\fm/p) = 1$.
\end{defn} 

It is clear that if $f$ and $g$ are $p$-congruent, then the same is true for all elements of their Galois orbits. 
Moreover, if $A/\QQ$ and $B/\QQ$ are $(\fp,\fq)$-congruent then the corresponding (Galois orbits of) modular forms $f$ and $g$ are $p$-congruent (as is well known, the converse is true up to semisimplification, see \Cref{prop:sturm-cong}\ref{iii:ss-isom}). This suggests that, in order to search for $(\fp, \fq)$-congruences between abelian varieties, one may instead search for $p$-congruences of modular forms. Our main tool for finding the examples in \Cref{thm:examples} is the following theorem which builds on work of Cremona--Freitas~\cite{CF_GMFTSTOCBEC}, in which the coefficient fields are $\QQ$.

\Cref{thm:congruence-classification} lists all potential $p$-congruences of newforms in the LMFDB, for primes in the range $5 \leq p \leq 4000$, and levels up to $10\,000$.

\begin{theorem}
    \label{thm:congruence-classification}
    Suppose that $f \in S_2^{\textnormal{new}}(\Gamma_0(N))$ and $g \in S_2^{\textnormal{new}}(\Gamma_0(M))$ are newforms of level $N,M \leq 10\,000$ which are not Galois conjugate. Suppose that the coefficient fields $K_f$ and $K_g$ satisfy $\deg(K_f), \deg(K_g) \leq 4$ and that $\cO_f$ and $\cO_g$ are maximal orders. If there exists a prime number $5 \leq p \leq \bigp$ such that $f$ and $g$ are $p$-congruent modulo an unramified prime, then the triple $(f, g, p)$ is contained in the corresponding file \textnormal{\texttt{data/congruences/p.txt}} in \cite{ELECTRONIC}. 
    
    Moreover, if $23 \leq p \leq \bigp$ and $\deg(K_f), \deg(K_g) \leq 2$ then $(f,g,p)$ is listed in \Cref{tab:mf-congs} and the converse holds (i.e., $f$ and $g$ are $p$-congruent modulo an unramified prime).
\end{theorem}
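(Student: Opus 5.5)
This is a computational classification, and I will follow the Cremona--Freitas sieve adapted to coefficient fields of degree $\le 4$. Fix a pair $(f,g)$ of non-conjugate newforms in the LMFDB with levels $N,M\le 10\,000$ and maximal coefficient rings of degree $\le 4$. If $f$ and $g$ are $p$-congruent modulo an unramified prime $\fm$ of some $L$, then we can take $L$ to be the compositum of $K_f$ and $K_g$. Because $e(\fm/p)=1$, the residue fields of $\fm\cap\cO_f$ and $\fm\cap\cO_g$ embed in $\cO_L/\fm$, and all reductions happen in a finite field of characteristic $p$.

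\textbf{Step 1: necessary conditions from Frobenius.} For each prime $\ell\nmid pNM$, the characteristic polynomials $P_{f,\ell}(x)=\prod_\sigma (x-\sigma(a_\ell(f)))$ and $P_{g,\ell}$ have a common root modulo $p$. Hence $p$ divides the resultant $R_\ell=\operatorname{Res}(P_{f,\ell},P_{g,\ell})\in\ZZ$. For $\ell\mid NM$ with $\ell\ne p$, the standard level-lowering/Ribet-type comparison gives that $p$ divides a modified quantity. If $\ell\| N$ and $\ell\nmid M$, this is $\operatorname{Res}(P_{g,\ell},x^2-(\ell+1)^2)$. If $\ell$ divides both levels, it involves $a_\ell=\pm1$ or $0$. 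I would state this as the analogue of Cremona--Freitas' lemma, using $\bar\rho^{ss}$ and the known local shape of $\rho_{f,\fp}|_{G_{\QQ_\ell}}$ (Steinberg twist, or Frobenius eigenvalues $1,\ell$ on the unramified quotient). Taking the gcd over small $\ell$ yields an integer $D_{f,g}$ that $p$ must divide whenever $p\nmid NM\ell$. The primes $p\mid NM$ and the $\ell=p$ terms are handled by keeping several $\ell$'s, so that each $p$ is excluded by some $\ell\neq p$.

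\textbf{Step 2: the search.} For every pair, compute $D_{f,g}$ from the stored $a_\ell$ and record every $5\le p\le 4000$ dividing it into \texttt{data/congruences/p.txt}. The cost is quadratic in the number of orbits, so I would first bucket the forms by the reductions of $(P_{f,\ell}\bmod p)$ for the first few $\ell$, or by $\bar\rho$ invariants such as $\det$. Maximality of $\cO_f$ ensures that the Hecke eigenvalues and the prime $\fp$ are computed inside the LMFDB-stored ring without index issues. The first statement follows, since it only asserts that every congruence survives the sieve.

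\textbf{Step 3: the converse for $p\ge 23$ and degree $\le 2$.} For each surviving triple in \Cref{tab:mf-congs}, I would prove an actual congruence with a Sturm bound argument, via \Cref{prop:sturm-cong}. Choose $\fm$ in $L$ explicitly. Raise both forms to the common level $\operatorname{lcm}(N,M)$, after multiplying by suitable Euler-factor operators to kill the bad-prime coefficients, or pass to level $\operatorname{lcm}(N,M)\prod\ell$. Then check $a_n(f)\equiv a_n(g)\pmod{\fm}$ up to the Sturm bound $\tfrac{k}{12}[\SL_2(\ZZ):\Gamma_0(\mathrm{lcm})]$. This implies $a_\ell(f)\equiv a_\ell(g)$ for all $\ell\nmid pNM$.

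The main obstacle is Step 3: the Sturm bound at levels near $10^8$ exceeds the coefficients stored in the LMFDB. The first thing I would try is to avoid the full common level. I would instead compare $\bar\rho^{ss}$ through the Chebotarev/Brauer--Nyquist approach, or use the Kraus--Oesterlé trick: check only $a_\ell$ for $\ell$ up to a bound linear in the level, with the degree $\le2$ restriction making resultant checks exact.
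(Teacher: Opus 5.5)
Your overall plan has the same shape as the paper's: a provably necessary test on the stored $a_\ell$ produces a superset of candidates, and the entries of \Cref{tab:mf-congs} are then verified through \Cref{prop:sturm-cong}. The sieve itself is different. You test pairwise whether $p\mid\operatorname{Res}(P_{f,\ell},P_{g,\ell})$. The paper instead, for each $p$, hashes the tuples $\sigma(\mathsf{h}_{\cL_i}(f;\fp))$ over \emph{every} prime $\fp\mid p$ of $\cO_f$ and every embedding $\sigma\colon\FF_\fp\hookrightarrow\overbar{\FF}_p$. It uses six fixed sets $\cL_i$ of primes in $[419,991]$, and \Cref{lemma:L-cover-S} guarantees that any two levels $\le 10\,000$ are coprime to every prime of some $\cL_i$.

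Restricted to $\ell\nmid pNM$, your resultant condition is a correct necessary condition, so an exhaustive pairwise run would prove the first claim. The step you rely on for feasibility, however, fails.
\begin{itemize}
\item A congruence gives a \emph{common root} of $P_{f,\ell}$ and $P_{g,\ell}$ modulo $p$, not equality of these polynomials. Bucketing by $P_{f,\ell}\bmod p$ therefore discards every mixed-degree congruence, e.g.\ \LMFDBLabelMF{4968.2.a.bf} with \LMFDBLabelMF{4968.2.a.a} (degrees $2$ and $1$). It also discards every congruence that holds modulo only one prime above a split $p$.
\item Bucketing by $\det$ is vacuous, since it is always the mod $p$ cyclotomic character.
\item The correct key is the tuple of individual reductions, one for each $(\fp,\sigma)$. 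For buckets to be meaningful, all forms must be hashed at the \emph{same} primes $\ell$, coprime to all their levels. Using ``the first few $\ell$'' cannot achieve this, since small primes divide a large proportion of the levels.
\end{itemize}
Supplying such a common set of $\ell$ is exactly the role of \Cref{lemma:hashes,lemma:L-cover-S}, and it is the idea missing from your Step 2.

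There are three further points.
\begin{itemize}
\item The bad-prime conditions in Step 1 are unnecessary and, as sketched, dangerous. If $\ell\parallel N$ and $\ell\parallel M$ (with $\ell\neq p$), a congruence only forces $a_\ell(f)=a_\ell(g)$ \emph{or} $p\mid\ell+1$. This is why $S$ and the enlarged level $\eta$ appear in \Cref{prop:sturm-cong}. Imposing equality could delete genuine congruences; the paper sieves with $\ell\nmid NM$ only.
\item For the ``moreover'' part, divisibility of each resultant separately does not force one consistent choice of $(\fp,\fq,\sigma,\tau)$ across all $\ell$. Before claiming that the survivors with $p\ge 23$ and degree $\le 2$ are exactly \Cref{tab:mf-congs}, you must compare all good $a_\ell$ with $\ell<1000$ for a fixed choice and discard the failures, as the paper does. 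Your Step 3 silently identifies the survivors with the table; only after this elimination does the Sturm check give both directions.
\item The obstacle you single out does not arise. Every pair in \Cref{tab:mf-congs} has $N=M$, so there is no $\operatorname{lcm}$ blow-up. One has $\eta=N\prod_{\ell\in S}\ell$, and the coefficients up to $\mu(\eta)/6$ are simply computed in \textsc{Magma}. \Cref{prop:sturm-cong}\ref{iii:equalities} already is the Kraus--Oesterl\'e reduction you propose as a fallback, whereas Chebotarev/Brauer--Nesbitt gives no usable bound.
\end{itemize}
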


\begin{remark} \hfill
  \begin{enumerate}
  \item
    It is almost certain that each triple in \Cref{thm:congruence-classification} is an example of a $p$-congruence. Indeed for each such pair the traces of Frobenius $a_{\ell}(f)$ and $a_{\ell}(g)$ are congruent for all $\ell < 1000$ with $\ell \nmid pNM$. To prove this in any individual case, one may compute Fourier coefficients up to a related Sturm bound (see \Cref{prop:sturm-cong}\ref{iii:equalities}).
  \item
    For each $d \leq 4$ we found a $p$-congruence of the form in \Cref{thm:congruence-classification} with $\deg(K_f), \deg(K_g) \leq d$ and $p = p_d$ where $p_1 = 13$, $p_2 = 59$, $p_3 = 197$, and $p_4 = 739$ (and these were the largest such primes). The corresponding pairs of congruent forms are (\LMFDBLabelMF{735.2.a.d}, \LMFDBLabelMF{9555.2.a.h}), (\LMFDBLabelMF{4332.2.a.e}, \LMFDBLabelMF{4332.2.a.f}), (\LMFDBLabelMF{7854.2.a.bb}, \LMFDBLabelMF{7854.2.a.bc}), and (\LMFDBLabelMF{8775.2.a.bg}, \LMFDBLabelMF{8775.2.a.bo}) respectively. It is likely that there exist no $p$-congruences of this form (in particular with $N,M \leq 10\,000$) for any $p > p_d$. It may be possible to prove this by adapting the methods used in the proof of \cite[Theorem~1.3]{CF_GMFTSTOCBEC}.
  \item
    It is notable that for each of the congruences visualising an element of $\Sha(A/\QQ)[p]$ in \Cref{thm:examples-vis} the prime $p$ splits in $\End_\QQ(A)$.
  \end{enumerate}    
\end{remark}

\subsection{Outline of the paper}
In \Cref{sec:prove-congs-ss} we generalise a result of Kraus--Oesterlé \cite[Proposition~4]{KO_SUQDBM} which allows us to prove $p$-congruences of modular forms by comparing Fourier coefficients up to a Sturm bound (this result is likely known to experts). Applying this result to the pairs of forms in \Cref{tab:full-examples} (which includes the examples in \Cref{thm:examples}) we prove \Cref{thm:congruence-classification-av}, which gives several examples of $(\fp,\fq)$-congruent pairs of abelian varieties of dimension $\leq 2$.

In \Cref{sec:sieve} we describe our generalisation of the algorithm given by Cremona--Freitas~\cite{CF_GMFTSTOCBEC} for finding examples of $p$-congruences of newforms and apply this to prove \Cref{thm:congruence-classification}.

Note that if one is only interested in \emph{proving} \Cref{thm:examples} \Cref{sec:sieve} is not required, however it is \Cref{thm:congruence-classification} which provides the candidates which we search through to find these examples.
With this in mind, in \Cref{sec:filter} we describe how we filter for abelian varieties whose Tate--Shafarevich groups may be nontrivial, starting from \Cref{thm:congruence-classification}. This step relies crucially on the works \cite{CEHJMPV_ADOCWMJ,CEHJMPV_ADOCWMJ_electronic} who give explicit abelian surfaces whose associated newforms are contained in the LMFDB. At the end of \Cref{sec:filter} we prove \Cref{thm:examples} by combining \Cref{thm:congruence-classification-av} with local criteria due to Agashe--Stein~\cite[Theorem~3.1]{AS_VOSTGOAV} and Fisher~\cite[Theorem~2.2]{F_VEOO7ITTSGOAEC} (see also~\Cref{thm:vis-crit} and \Cref{sec:theorem-agashe-stein}).

We study closely the case of the abelian surface \LMFDBLabelMF{9603.2.a.o} in \Cref{sec:invis} and in particular we provide evidence for \Cref{conj:invisible-7}.

The code associated to this article is written in \textsc{Magma}~\cite{MAGMA} and \textsc{Sagemath}~\cite{sagemath} and is available through the GitHub repository \cite{ELECTRONIC}. The larger datafiles are available through Zenodo at \cite{DATA}.

\subsection{Acknowledgements}
We thank Martin Azon, Raymond van~Bommel, Tom Fisher, and Céline Maistret for helpful comments and discussions. We are especially grateful to Brendan Creutz for discussions about \Cref{sec:invis}, to Tim Dokchitser for computing several Tamagawa numbers using~\cite{DJ_RMOC}, to Kimball Martin for sharing with us the RM abelian surfaces corresponding to newforms in the LMFDB from~\cite{CEHJMPV_ADOCWMJ,CEHJMPV_ADOCWMJ_electronic}, and to several anonymous referees whose feedback greatly improved this article. Much of the work for this article was done while the authors were based at the University of Bristol. \textsc{ChatGPT-5.6 Sol} was used for copy-editing this article. SF was supported in part by the Royal Society through Céline Maistret's Dorothy Hodgkin Fellowship and also by the HYPERFORM consortium, funded by France through Bpifrance.

\section{Proving congruences of modular forms}
\label{sec:prove-congs-ss}

In this section we give a computational criterion (\Cref{prop:sturm-cong}) for proving whether a pair of forms are $p$-congruent, adapting (and closely following) \cite[Proposition~4]{KO_SUQDBM} (which is the case of \Cref{prop:sturm-cong} where $\cO_f = \cO_g = \ZZ$).



\begin{lemma}
\label{lemma:mod-p-is-enough}
    Let $f \in S_2^{\textnormal{new}}(\Gamma_0(N))$ and $g \in S_2^{\textnormal{new}}(\Gamma_0(M))$ be newforms whose coefficient rings $\cO_f$ and $\cO_g$ are the rings of integers of $K_f$ and $K_g$ respectively. Then $f$ and $g$ are $p$-congruent modulo an unramified prime if and only if there exist unramified prime ideals $\fp \subset \cO_f$ and $\fq \subset \cO_g$ lying above $p$, and an isomorphism $\FF_\fp \cong \FF_\fq$ such that $a_\ell(f) \equiv a_\ell(g)$ in $\FF_\fp$ for all $\ell \nmid pNM$.
\end{lemma}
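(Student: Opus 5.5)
We prove the two implications separately. Both come down to elementary algebraic number theory about compositum fields and the residue fields of primes in them.

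For the forward direction, suppose $L$, $\fm \mid p$ with $e(\fm/p)=1$, and embeddings $\sigma\colon K_f\hookrightarrow L$, $\tau\colon K_g\hookrightarrow L$ are given with $\sigma(a_\ell(f))-\tau(a_\ell(g))\in\fm$ for all $\ell\nmid pNM$. Set $\fp=\sigma^{-1}(\fm)\cap\cO_f$ and $\fq=\tau^{-1}(\fm)\cap\cO_g$. These are primes above $p$, since $\sigma(\cO_f)\subset\cO_L$ because $\cO_f$ is integral over $\ZZ$. By multiplicativity of ramification indices in the tower $\QQ\subset\sigma(K_f)\subset L$, we get $e(\fm/p)=e(\fm/\sigma\fp)\,e(\fp/p)$, so $e(\fp/p)=1$, and likewise $e(\fq/p)=1$.

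The maps $\sigma,\tau$ induce embeddings $\bar\sigma\colon\FF_\fp\hookrightarrow\FF_\fm$ and $\bar\tau\colon\FF_\fq\hookrightarrow\FF_\fm$. The congruence says $\bar\sigma(\bar a_\ell(f))=\bar\tau(\bar a_\ell(g))$ for all such $\ell$. The remaining point is to produce an isomorphism $\FF_\fp\cong\FF_\fq$ compatible with these images. Here we use the hypothesis that $\cO_f$ and $\cO_g$ are the full rings of coefficients, i.e.\ generated by the $a_n$. The key claim is that $\FF_\fp$ is generated over $\FF_p$ by the reductions $\bar a_\ell(f)$ for $\ell\nmid pNM$.

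To establish the claim:
\begin{itemize}
\item The $a_n$, for all $n$, generate $\cO_f$ as a ring, so their reductions generate $\FF_\fp$.
\item The coefficients $a_n$ are polynomials in the $a_q$ for primes $q$, so the prime-indexed coefficients already generate $\FF_\fp$.
\item For $q\mid NM$ or $q=p$ we have no direct control. Here we invoke Chebotarev applied to $\bar\rho_{f,\fp}$: the field generated by traces of Frobenius at the primes $\ell\nmid pNM$ equals the field generated by all traces of $\bar\rho_{f,\fp}$.
\item This in turn contains $\bar a_q$ for $q\mid N$ with $q\ne p$, via local–global compatibility: $a_q\in\{0,\pm1\}$ there.
\item At $q=p$, where $p\nmid N$, the characteristic polynomial $x^2-a_px+p$ of Frobenius shows $\bar a_p$ is a trace of the (crystalline) Dieudonné module. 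Alternatively, we can sidestep this: $T_\ell$ for $\ell\mid NM$ or $\ell=p$ lies in the Hecke algebra, which is generated as a $\ZZ$-module by finitely many $T_n$, and a Sturm-type argument bounds the rest.
\end{itemize}
I expect this to be the main obstacle. The cleanest route is to replace $\FF_\fp$ by the subfield $k_f$ generated by the $\bar a_\ell(f)$, $\ell\nmid pNM$, and argue that this is all of $\FF_\fp$.

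Granting the claim, $\bar\sigma(\FF_\fp)$ and $\bar\tau(\FF_\fq)$ are both the subfield of $\FF_\fm$ generated by the common elements $\bar\sigma(\bar a_\ell(f))=\bar\tau(\bar a_\ell(g))$. Hence $\bar\tau^{-1}\circ\bar\sigma$ is an isomorphism $\FF_\fp\cong\FF_\fq$ under which $\bar a_\ell(f)\mapsto\bar a_\ell(g)$.

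For the converse, suppose $\fp,\fq$ are unramified and $\phi\colon\FF_\fp\cong\FF_\fq$ is an isomorphism matching the $\bar a_\ell$. Let $L$ be the Galois closure of the compositum of $K_f$ and $K_g$. Choose a prime $\fm_0$ of $L$ above $\fp$ for a fixed embedding of $K_f$. Every prime of $\cO_L$ above $p$ is of the form $\gamma\fm_0$ restricted, and the Galois group acts transitively on the pairs of embeddings and primes lying over $\fq$. So we may choose an embedding $\tau\colon K_g\hookrightarrow L$ with $\tau^{-1}(\fm_0)=\fq$.

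Now $\bar\tau\circ\phi$ and $\bar\sigma$ are two embeddings $\FF_\fp\hookrightarrow\FF_{\fm_0}$, so they differ by a power of Frobenius. The decomposition group surjects onto $\Gal(\FF_{\fm_0}/\FF_p)$, so precomposing $\tau$ with a suitable decomposition-group element, which fixes $\fm_0$, makes the embeddings agree. Then $a_\ell(f)-a_\ell(g)\in\fm_0$.

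It remains to arrange $e(\fm_0/p)=1$. The compositum of two fields unramified at $p$ is unramified at $p$, but the Galois closure need not be. So we should take $L$ to be the compositum $\sigma(K_f)\tau(K_g)$ inside $\overline{\QQ}$ and restrict to a prime of the Galois closure. Its ramification over $p$ is then bounded by that of the compositum of the unramified local extensions $K_{f,\fp}K_{g,\fq}$, which is unramified, and this gives $e=1$.
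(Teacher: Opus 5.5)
Your forward direction has the same shape as the paper's. You contract $\fm$ to $\fp$ and $\fq$, and observe that $\bar\sigma(\FF_\fp)$ and $\bar\tau(\FF_\fq)$ coincide in $\FF_\fm$ because each is generated by the common residues of the $a_\ell$ with $\ell\nmid pNM$. You also check $e(\fp/p)=e(\fq/p)=1$, which the paper leaves implicit. The paper simply asserts this generation property. You rightly isolate it as the crux, but your justification has a genuine gap at the one nontrivial coefficient, $\bar a_p(f)$ when $p\nmid N$.

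The other cases are fine. For $q\mid N$ one has $a_q\in\{0,\pm1\}$. For $q\nmid pN$ the residue $\bar a_q$ is a trace of $\bar\rho_{f,\fp}$, hence lies in $k=\FF_p(\bar a_\ell(f):\ell\nmid pNM)$ by Chebotarev. For $q=p$, however, your two suggestions do not work. Calling $\bar a_p$ a trace on a Dieudonn\'e module only places it in $\FF_\fp$, not in $k$. Finite generation of the Hecke algebra plus a Sturm bound says nothing about whether $T_p$ lies in the subring generated by the $T_\ell$ with $\ell\nmid pNM$.

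Some input using that $p$ is odd is unavoidable. For the newform $f$ with label \LMFDBLabelMF{23.2.a.a} one has $a_2=\beta$ with $\beta^2+\beta-1=0$, so $\FF_\fp=\FF_4$ for $\fp=2\cO_f$. On the other hand, $\bar\rho_{f,\fp}$ is the $S_3$-representation cut out by $x^3-x-1$, so $\bar a_\ell(f)\in\FF_2$ for every $\ell\neq 2,23$. Comparing $f$ with the elliptic curve $y^2=x^3-x-1$ (take $L=K_f$, $\fm=2\cO_f$) shows that the lemma itself fails for $p=2$.

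For odd $p$ the gap closes as follows. If $\bar a_p=0$ there is nothing to prove. Otherwise $f$ is ordinary at $\fp$, and $\rho_{f,\fp}|_{G_{\QQ_p}}$ is an extension of an unramified character $\mu$ by $\chi_p\mu^{-1}$. Here $\mu(\Frob_p)$ is the unit root of $x^2-a_px+p$ and $\chi_p$ is the $p$-adic cyclotomic character. Hence every $\sigma\in G_{\QQ_p}$ lifting $\Frob_p$ satisfies $\tr\bar\rho_{f,\fp}(\sigma)=\bar a_p+\epsilon(\sigma)\bar a_p^{-1}$, where $\epsilon$ is the mod $p$ cyclotomic character. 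All these traces lie in $k$ by Chebotarev, and $\epsilon$ takes every value in $\FF_p^\times$ on such lifts. Since $|\FF_p^\times|\geq 2$, subtracting two of these traces gives $\bar a_p^{-1}\in k$.

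Your converse is correct and differs from the paper only in how it secures $e(\fm/p)=1$. The paper passes to the inertia field $\widetilde L^{I}$ of $\widetilde\fm$, which contains the images of $K_f$ and $K_g$ precisely because $\fp$ and $\fq$ are unramified. You instead restrict $\fm_0$ to the compositum $\sigma(K_f)\tau(K_g)$, whose completion is the compositum of the unramified extensions $K_{f,\fp}$ and $K_{g,\fq}$ of $\QQ_p$. This is equally valid and slightly more economical. There are two slips there. First, the remark that a compositum of fields unramified at $p$ is unramified does not apply, since $K_f$ and $K_g$ may ramify at other primes above $p$; only your local version is correct. Second, the decomposition-group element must be composed after $\tau$, not before.
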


\begin{proof}
  First suppose that the Galois orbits of $f$ and $g$ are $p$-congruent. By definition there exists a field $L$, a prime ideal $\fm \subset \cO_L$, and embeddings $K_f \hookrightarrow L$ and $K_g \hookrightarrow L$ so that $a_\ell(f) \equiv a_\ell(g) \pmod \fm$ for all $\ell \nmid pNM$. Let $\fp = \cO_f \cap \fm$ and $\fq = \cO_g \cap \fm$. Since $\cO_f$ is generated by those $a_\ell(f)$ with $\ell \nmid pNM$ (and respectively for $g$) the fields $\FF_{\fp}$ and $\FF_{\fq}$ have the same cardinality. In particular, they are the same subfields of $\FF_{\fm}$ and this gives the isomorphism $\FF_{\fp} \cong \FF_{\fq}$ from the statement.

  Conversely suppose that there exists an isomorphism $\tau \colon \FF_\fp \cong \FF_\fq$ such that $a_\ell(f) \equiv a_\ell(g)$ in $\FF_\fp$ for all $\ell \nmid pNM$. Choose embeddings $K_f, K_g \hookrightarrow \overbar{\QQ}$ and let $\widetilde{L}$ be the Galois closure of the compositum of $K_f$ and $K_g$ inside $\overbar{\QQ}$. Let $\cO_{\widetilde L} \subset \widetilde{L}$ be the ring of integers of $\widetilde{L}$. After possibly changing the embedding $K_g \hookrightarrow \widetilde{L}$ we may choose a prime $\widetilde{\fm} \subset \cO_{\widetilde L}$ lying above both $\fp$ and $\fq$. Let $I \subset D \subset \Gal(\widetilde{L}/\QQ)$ be the decomposition and inertia groups of ${\widetilde{\fm}}$ and choose $\widetilde{\tau} \in D$ so that the image of $\widetilde{\tau}$ in $D / I = \Gal(\FF_{\widetilde{\fm}}/\FF_p)$ restricts to $\tau$ on $\FF_\fp$. Let $L \subset \widetilde{L}$ be the field fixed by $I$ and let $\fm = L \cap \widetilde{\fm}$. Then $e(\fm/p) = 1$ and by construction for all $\ell \nmid pNM$ we have $\widetilde{\tau}(a_\ell(f)) \equiv a_\ell(g) \pmod{\fm}$, as required.
\end{proof}

For a positive integer $\eta$ we write $\mu(\eta) = \eta \cdot \prod_{p \mid \eta} (1+1/p)$ for the index of $\Gamma_0(\eta)$ in $\SL_2(\ZZ)$. For a prime $\ell$ we write $v_\ell$ for the $\ell$-adic valuation, normalised so that $v_\ell(\ell) = 1$. Given a representation $\rho$ we write $\rho^{\mathrm{ss}}$ for the semisimplification of $\rho$.

\begin{prop}
  \label{prop:sturm-cong}
  Let $f \in S_2^{\textnormal{new}}(\Gamma_0(N))$ and $g \in S_2^{\textnormal{new}}(\Gamma_0(M))$ be newforms whose coefficient rings $\cO_f$ and $\cO_g$ are the rings of integers of $K_f$ and $K_g$ respectively. Let $S$ be the set of prime numbers $\ell$ such that $v_{\ell}(N) = v_{\ell}(M) = 1$ and $a_{\ell}(f) \neq a_{\ell}(g)$. Define
  \begin{equation*}
    \eta = \operatorname{lcm}(N,M) \prod_{\ell \in S} \ell .
  \end{equation*}
%
  The following are equivalent:
  \begin{enumerate}[label=(\arabic*)]
  \item \label{iii:fp-fq-cong-i}
    The modular forms $f$ and $g$ are $p$-congruent modulo an unramified prime.
  \item \label{iii:ss-isom}
    There exist unramified primes $\fp \subset \cO_f$ and $\fq \subset \cO_g$ and an isomorphism $\FF_\fp \cong \FF_\fq$ such that the semisimplifications $\bar{\rho}_{f, \fp}^{\mathrm{ss}}$ and $\bar{\rho}_{g, \fq}^{\mathrm{ss}}$ are isomorphic. 
  \item \label{iii:equalities}
     There exist unramified primes $\fp \subset \cO_f$ and $\fq \subset \cO_g$ and an isomorphism $\FF_\fp \cong \FF_\fq$ such that both of the following hold:
    \begin{enumerate}
    \item
      for all primes $\ell \leq \mu(\eta) / 6$ with $\ell \nmid N M$ we have $a_\ell(f) \equiv a_\ell(g)$ in $\FF_\fp$, and
    \item
      for all primes $\ell \leq \mu(\eta) / 6$ with $v_\ell(N M) = 1$ we have $a_\ell(f) a_\ell(g) \equiv \ell + 1$ in $\FF_\fp$.
    \end{enumerate}
  \end{enumerate}
\end{prop}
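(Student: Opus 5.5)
We will prove the cycle of implications (1)$\Rightarrow$(2)$\Rightarrow$(3)$\Rightarrow$(1), using \Cref{lemma:mod-p-is-enough} to pass freely between (1) and the statement ``there exist unramified $\fp,\fq$ and $\FF_\fp\cong\FF_\fq$ with $a_\ell(f)\equiv a_\ell(g)$ for all $\ell\nmid pNM$''.

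For (1)$\Rightarrow$(2), \Cref{lemma:mod-p-is-enough} supplies $\fp,\fq$ and the identification of residue fields with $a_\ell(f)\equiv a_\ell(g)$ for all $\ell\nmid pNM$. Since $\tr\bar\rho_{f,\fp}(\Frob_\ell)=a_\ell(f)$ and $\det=\chi_p$ (the mod $p$ cyclotomic character) for both forms, the characteristic polynomials of Frobenius agree on a density-one set. By Chebotarev they agree on all of $G_\QQ$, and Brauer--Nesbitt then gives $\bar\rho_{f,\fp}^{\mathrm{ss}}\cong\bar\rho_{g,\fq}^{\mathrm{ss}}$. Conversely, (2) gives equality of traces at all $\ell\nmid pNM$, hence (1) by the lemma. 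So (1)$\Leftrightarrow$(2) is routine, and (3) needs to be tied in.

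For (2)$\Rightarrow$(3), part (a) holds for $\ell\ne p$ by comparing traces. For $\ell=p\nmid NM$ it follows from the standard fact that $a_p \bmod \fp$ is determined by $\bar\rho|_{G_{\QQ_p}}^{\mathrm{ss}}$: in the ordinary case via the unramified character $\lambda(a_p)$, and in the supersingular case $a_p\equiv0$ by Fontaine--Edixhoven. For (b), take $\ell\,\|\,N$. If also $\ell\,\|\,M$, we have $a_\ell(f),a_\ell(g)\in\{\pm1\}$ and $\bar\rho|_{G_{\QQ_\ell}}$ is an extension of $\lambda(a_\ell)$ by $\lambda(a_\ell)\chi_p$. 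Comparing semisimplifications gives either $a_\ell(f)\equiv a_\ell(g)$, so the product is $1\equiv\ell+1$ only if\dots

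This shows that (b) as literally stated needs care. I read (b) as the Kraus--Oesterlé condition: either $a_\ell(f)\equiv a_\ell(g)$ when both are Steinberg, or the ``level-raising'' relation $a_\ell(f)a_\ell(g)\equiv \ell+1$ when $\ell$ divides exactly one of the levels, where $a_\ell$ of the unramified form is then paired with $\pm1$ from the other. I will follow \cite[Proposition~4]{KO_SUQDBM} verbatim here, using the local description at $\ell$. If $\ell\,\|\,N$ and $\ell\nmid M$, the unramified semisimplification of $\bar\rho_g$ at $\ell$ has Frobenius eigenvalues $\{\epsilon,\epsilon\ell\}$ with $\epsilon=a_\ell(f)=\pm1$. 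Hence $a_\ell(g)\equiv\epsilon(1+\ell)$, which gives $a_\ell(f)a_\ell(g)\equiv\ell+1$.

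For (3)$\Rightarrow$(1), which is the main step, I follow Kraus--Oesterlé. Replace $f,g$ by forms on the common group $\Gamma_0(\eta)$. Let $f'$ be $f$ with its Fourier coefficients $a_n$ for $\ell\mid n$ removed at primes $\ell\mid\eta$ where needed, achieved via the operators $f\mapsto f-a_\ell f|V_\ell+\ell\,\chi_0(\ell)f|V_{\ell^2}$ (``$\ell$-stabilisation'' / twisting away the Euler factor). Do the same for $g$. The extra factor $\prod_{\ell\in S}\ell$ in $\eta$ accommodates the stabilisation at primes $\ell\in S$ where both forms are Steinberg but $a_\ell(f)\neq a_\ell(g)$. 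The resulting forms $F,G\in S_2(\Gamma_0(\eta))$ have coefficients in $\cO_L$ (with $L$, $\fm$ unramified over $p$ as in \Cref{lemma:mod-p-is-enough}). Their $n$-th coefficients vanish unless $n$ is coprime to the primes where we killed them, and they are multiplicative.

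Hypotheses (a) and (b) then give $a_\ell(F)\equiv a_\ell(G)\pmod\fm$ for all primes $\ell\le\mu(\eta)/6$. Here (b) is used exactly to check that the relevant surviving coefficients at $\ell\,\|\,NM$ match after stabilisation. Multiplicativity and the Hecke recursion $a_{\ell^{k+1}}=a_\ell a_{\ell^k}-\ell a_{\ell^{k-1}}$ extend this to all $n\le\mu(\eta)/6$. Sturm's theorem, with bound $k\mu/12=\mu(\eta)/6$ for $k=2$ and applied over $\cO_L$ localized at the unramified prime $\fm$, gives $F\equiv G\pmod\fm$ coefficientwise. Hence $a_\ell(f)\equiv a_\ell(g)$ for all $\ell\nmid pNM$, and \Cref{lemma:mod-p-is-enough} yields (1).

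The expected obstacle is the bookkeeping at primes dividing $NM$. One must choose the right stabilisation at each $\ell$, according to whether $\ell^2\mid N$, $\ell\,\|\,N$ but not $M$, or $\ell\in S$. Two things then need checking: that the level stays $\eta$, and that (b) is precisely what makes the first $\mu(\eta)/6$ coefficients agree. I would mirror the case split in Kraus--Oesterlé, noting that nothing changes when $\ZZ$ is replaced by $\cO_L$ localized at an unramified prime, since Sturm's theorem holds for any prime ideal of a ring of integers.
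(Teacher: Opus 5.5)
Your treatment of (1)$\Leftrightarrow$(2) and (2)$\Rightarrow$(3) follows the paper: Chebotarev plus Brauer--Nesbitt, then the local argument of Kraus--Oesterl\'e. One correction there: $v_\ell(NM)=v_\ell(N)+v_\ell(M)$, so (b) concerns only primes dividing exactly one of $N,M$, and exactly once. Nothing is required when $\ell\,\|\,N$ and $\ell\,\|\,M$. The extra condition $a_\ell(f)\equiv a_\ell(g)$ that you propose for that case is not implied by (2). If $\ell\equiv-1\pmod p$, then $\lambda(\epsilon)\oplus\lambda(\epsilon)\chi_p$ is symmetric in $\epsilon=\pm1$, so opposite signs at $\ell$ are compatible with congruence. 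This is exactly why $S$ enters $\eta$ rather than the hypotheses. Also, your Frobenius-eigenvalue argument for (b) needs $\ell\neq p$. For $\ell=p$, compare the unramified constituents of $\bar\rho|_{G_{\QQ_p}}$ instead.

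The genuine gap is in (3)$\Rightarrow$(1), precisely at the primes where (b) is used, namely $v_\ell(N)=0$ and $v_\ell(M)=1$ (or symmetrically). The only operator you write down is $f\mapsto f-a_\ell f|V_\ell+\ell\chi_0(\ell)f|V_{\ell^2}$, and here $\chi_0(\ell)=1$. That operator puts $f$ at level $N\ell^2$, but $v_\ell(\eta)=1$ because such $\ell$ is not in $S$. Killing the Euler factor of $g$ with $1-\epsilon V_\ell$ has the same defect. Doing nothing at $\ell$ also fails, since (b) gives $a_\ell(f)\equiv\epsilon(1+\ell)$, which in general is not $\epsilon=a_\ell(g)$.

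So the bookkeeping you defer is not routine. The missing idea is the paper's operator $1-cV_\ell$ (their $1-cR_\ell$) with $c=a_\ell(f)-a_\ell(g)$, applied to $f$ only. It raises the level by a single factor of $\ell$ and keeps the coefficients in $\cO_L$. The resulting Euler factor is $(1-cT)/(1-a_\ell(f)T+\ell T^2)$, and this is congruent to $g$'s factor $1/(1-\epsilon T)$ modulo $\fm$ because
\[
(1-cT)(1-\epsilon T)=1-a_\ell(f)T+(\epsilon a_\ell(f)-1)T^2\equiv 1-a_\ell(f)T+\ell T^2 \pmod{\fm},
\]
and the last congruence is exactly hypothesis (b). Since $c\equiv\epsilon\ell$, this is the $\ell$-stabilisation that removes the root $\epsilon\ell$ of $X^2-a_\ell(f)X+\ell$ without adjoining it.

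Relatedly, after such modifications the local factors no longer have the shape $1-aT+\ell T^2$. So checking $a_\ell(F)\equiv a_\ell(G)$ and invoking the Hecke recursion is not enough. You must compare the full local factors $\sum_k a_{\ell^k}T^k$ modulo $\fm$, as the paper does, before using multiplicativity and Sturm's bound.
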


\begin{remark}
    Note that it may not be the case that $\eta = \operatorname{lcm}(N,M)$, even when $N = M$ (which is the case in all the examples in \Cref{table:examples,tab:mf-congs,tab:full-examples}). This is because the Sturm bound (used in the proof of \Cref{prop:sturm-cong}) implies that a weight $2$ modular form $h$ of level $\Gamma_0(\eta))$ satisfies $a_\ell(h) \equiv 0 \pmod{\fm}$ if $a_\ell(h) \equiv 0 \pmod{\fm}$ \emph{for all} $\ell \leq \mu(N)/6$ (not only those $\ell \nmid N$).
\end{remark}

\begin{proof}
  That \ref{iii:fp-fq-cong-i}$\Rightarrow$\ref{iii:ss-isom} follows immediately by combining the Brauer--Nesbitt and Chebotarev density theorems. The implication \ref{iii:ss-isom}$\Rightarrow$\ref{iii:equalities} follows from the same argument as Kraus--Oesterlé~\cite[Proposition 4]{KO_SUQDBM}, noting that by \cite[Theorem~4.6.17(2)]{M_MF} if $v_\ell(N) = 1$ then $a_\ell(f) \in \{ \pm 1\}$.
  
  For the implication \ref{iii:equalities}$\Rightarrow$\ref{iii:fp-fq-cong-i}, we again follow the argument of Kraus--Oesterlé \cite[Proposition 4]{KO_SUQDBM}. Possibly after replacing them by Galois conjugates, we may assume that $f$ and $g$ are newforms with coefficients contained in a common maximal order $\cO_L$ and that the isomorphism $\FF_\fp \cong \FF_\fq$ is induced by reduction modulo a prime $\fm \subset \cO_L$.
  
  For positive integers $m,d$ the map $\tau \mapsto d \tau$ on the complex upper half plane induces a homomorphism $R_d \colon S_2(\Gamma_0(m)) \to S_2(\Gamma_0(m d))$. Moreover, if $h \in S_2(\Gamma_0(m))$ is a cusp form then we have $L(R_d(h), s) = d^{-s} L(h, s)$. Recall that if $h \in S_2^{\textnormal{new}}(\Gamma_0(m))$ is a newform of level $m$ we have $L(h, s) = \prod_{\ell} L_\ell(h, \ell^{-s})^{-1}$ where the local factors are determined by the formula: 
  \[
     L_\ell(h, T) =
     \begin{cases}
       1- a_\ell(h) T + \ell T^2 &\textnormal{if $v_\ell(m) = 0$,}            \\
       1 - a_\ell(h) T &\textnormal{if $v_\ell(m) = 1$, and} \\
       1                 & \textnormal{if $v_\ell(m) \geq 2$.}
     \end{cases}
  \]
  For each positive integer $m$ and prime $\ell$ define linear operators
  \[
    \alpha_\ell \colon S_2(\Gamma_0(m)) \to S_2(\Gamma_0(m \ell^{v_\ell(\eta/N)})) \quad \text{and} \quad \beta_\ell \colon S_2(\Gamma_0(m)) \to S_2(\Gamma_0(m\ell^{v_\ell(\eta/M)}))
  \]
  by the case distinctions in \Cref{tab:ops}.
  Define $\alpha = \prod_\ell \alpha_\ell$ and $\beta = \prod_\ell \beta_\ell$ and write $f' = \alpha(f)$ and $g' = \beta(g)$. By construction $f', g' \in S_2(\Gamma_0(\eta))$.

  \begingroup
  \renewcommand{\arraystretch}{1.5}
  \begin{table}[t]
    \centering
    \begin{tabular}{c|c|c|c}
      $v_\ell(N)$ & $v_\ell(M)$ & $\alpha_\ell$                                & $\beta_\ell$                                \\
      \hline
      $0$         & $0$         & $1$                                     & $1$                                      \\
      $0$         & $1$         & $1 - (a_\ell(f) - a_\ell(g) )R_\ell$                                   & $1$ \\
      $1$         & $1$         & $\begin{cases} 1 & \ell \not\in S, \\ 1 - a_\ell(f) R_\ell & \ell \in S.  \end{cases}$                  & $\begin{cases} 1 & \ell \not\in S, \\ 1 - a_\ell(g) R_\ell & \ell \in S.  \end{cases}$                   \\
      $0$         & $\geq 2$    & $1 - a_\ell(f) R_\ell + \ell R_\ell^2$  & $1$                                      \\
      $1$         & $\geq 2$    & $1 - a_\ell(f) R_\ell$                  & $1$                                      \\
      $\geq 2$    & $\geq 2$    & $1$                                     & $1$                                      \\
    \end{tabular}
    \caption{Definition of the operators $\alpha_\ell$ and $\beta_\ell$ when $v_\ell(N) \leq v_\ell(M)$, the cases when $v_\ell(N) > v_\ell(M)$ are defined symmetrically.}
    \label{tab:ops}
  \end{table}
  \endgroup

  By \cite[Theorem~4.6.17(2)]{M_MF} if $v_\ell(N) = 1$ (resp. $v_\ell(M) = 1$) then we have $a_\ell(f) \in \{ \pm 1\}$ (resp. $a_\ell(g) \in \{\pm 1\}$). A direct calculation shows that the local factors $L_\ell(f', T)$ and $L_\ell(g', T)$ are equal modulo $\fm$ for every prime $\ell \leq \mu(\eta)/6$ (for details see the proof of \cite[Proposition~4]{KO_SUQDBM}). In particular we have $a_n(f') \equiv a_n(g') \pmod{\fm}$ for all integers $n \leq \mu(\eta)/6$.

  By the Sturm bound \cite[Corollary~9.19]{S_MFACA} all the Fourier coefficients of the modular forms $f'$ and $g'$ are equal modulo $\fm$. But for all $\ell \nmid NM$ we have $a_\ell(f) = a_\ell(f')$ and $a_\ell(g) = a_\ell(g')$, and the claim follows.
\end{proof}


With our target application of visibility in mind, we now turn to the task of relating congruences of modular forms to their associated abelian varieties. 

\begin{prop}
    \label{thm:congruence-classification-av}
    Let $(A, B, p)$ be any of the triples listed in \Cref{tab:full-examples} where $A/\QQ$ and $B/\QQ$ are abelian varieties and $p$ is a prime number. Then:
    \begin{enumerate}[label=(\roman*)]
        \item \label{iii:rm-claim} $A$ and $B$ have real multiplication by the rings of integers $\cO_A$ and $\cO_B$ of totally real fields of degree $\dim(A)$ and $\dim(B)$ respectively, and
        \item \label{iii:cong-claim} there exist primes $\fp \subset \cO_A$ and $\fq \subset \cO_B$ dividing $p$ such that $A$ and $B$ are $(\fp, \fq)$-congruent and $A[\fp]$ and $B[\fq]$ are irreducible $G_\QQ$-modules.
    \end{enumerate}
\end{prop}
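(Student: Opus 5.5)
This is a finite computational verification over the triples in \Cref{tab:full-examples}. For each triple $(A,B,p)$ we use the corresponding newforms $f=f_A$, $g=f_B$ from the LMFDB, together with the explicit models of $A$ and $B$ (genus $2$ Jacobians from \Cref{tab:weier-eqns}, elliptic curves when the dimension is $1$). Throughout, by Shimura's construction, $A$ and $B$ lie in the isogeny classes attached to the Galois orbits of $f$ and $g$.

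\textbf{Step 1: part \ref{iii:rm-claim}.} By the discussion in \Cref{sec:congs-wt-2}, $\End_\QQ(A)\otimes\QQ\cong K_f$, a totally real field of degree $\dim A$. Being an order in $K_f$ containing the Hecke image, $\End_\QQ(A)$ need not be maximal, but it is an isogeny invariant up to commensurability. The plan is:
\begin{enumerate}
\item check from the LMFDB that $\cO_f$ is the maximal order of $K_f$ (a finite discriminant computation);
\item for the specific model of $A$, certify that $\End_\QQ(A)$ is the full ring of integers, using the rigorous endomorphism-ring algorithms of Costa--Mascot--Sijsling--Voight implemented in \textsc{Magma}, as tabulated in \cite{CEHJMPV_ADOCWMJ}.
\end{enumerate}
For elliptic curves this step is trivial.

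\textbf{Step 2: the congruence of forms.} Apply \Cref{prop:sturm-cong} by computing $S$, $\eta$ and $\mu(\eta)/6$. We then exhibit unramified primes $\fp\subset\cO_f$, $\fq\subset\cO_g$ above $p$ and an isomorphism $\FF_\fp\cong\FF_\fq$ for which conditions (a) and (b) of \ref{iii:equalities} hold. This amounts to finitely many checks of Hecke eigenvalues from the LMFDB up to the Sturm bound. It yields $\bar\rho_{f,\fp}^{\mathrm{ss}}\cong\bar\rho_{g,\fq}^{\mathrm{ss}}$, hence $A[\fp]^{\mathrm{ss}}\cong B[\fq]^{\mathrm{ss}}$ as $\FF_\fp[G_\QQ]$-modules. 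Here we use that, because $\cO_A$ is maximal and $\fp$ is unramified, $A[\fp]$ is a $2$-dimensional $\FF_\fp$-vector space whose semisimplification agrees with $\bar\rho_{f,\fp}^{\mathrm{ss}}$.

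\textbf{Step 3: irreducibility (the main obstacle).} Once $A[\fp]$ is irreducible, the semisimplification is $A[\fp]$ itself, and likewise for $B[\fq]$; the isomorphism of Step 2 then gives the $(\fp,\fq)$-congruence. To prove irreducibility, suppose $A[\fp]$ were reducible. Then $\bar\rho_{f,\fp}^{\mathrm{ss}}\cong\chi_1\oplus\chi_2$ with $\chi_1\chi_2$ the mod $p$ cyclotomic character. Since $p\nmid N$ and the weight is $2$, the characters are unramified outside $N$ up to a twist by cyclotomic. Hence, for suitable primes $\ell\nmid pN$, we would have $a_\ell(f)\equiv \epsilon(\ell)+\ell\epsilon^{-1}(\ell)\pmod\fp$ for some character $\epsilon$ of conductor dividing $N$ (with conductor squared dividing $N$ by standard results). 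The plan is to rule this out by exhibiting a single prime $\ell$ violating every such congruence. For instance, choose $\ell\equiv1\pmod N$, so that all such $\epsilon$ are trivial at $\ell$, and check $a_\ell(f)\not\equiv 1+\ell\pmod\fp$. Alternatively, check that the characteristic polynomial $x^2-a_\ell x+\ell$ is irreducible mod $\fp$ for some $\ell$, which immediately forbids reducibility. The same check is carried out for $g$, or is implied by the isomorphism of semisimplifications. This step requires care with the characters involved, but remains a finite computation for each table entry.
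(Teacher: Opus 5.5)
Your Steps~2 and~3 are essentially the paper's argument: the Sturm bound via \Cref{prop:sturm-cong}, then irreducibility from one $\ell$ with $X^2-a_\ell X+\ell$ irreducible mod $\fp$. Your $\ell\equiv 1\pmod N$ variant is also fine.

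\textbf{The gap.} It is the sentence ``by Shimura's construction, $A$ and $B$ lie in the isogeny classes attached to $f$ and $g$.'' In dimension $2$, $A$ and $B$ are the \emph{specific} Jacobians of the curves in \Cref{tab:weier-eqns}. Shimura's construction goes from $f$ to some abelian variety $A_f$; it says nothing about a given genus~$2$ curve. Until you know that the $\fp$-adic representation of the explicit Jacobian is, up to isomorphism, that of $f$, a congruence between $f$ and $g$ tells you nothing about $A[\fp]$ or $B[\fq]$. Moreover, it is these explicit $A,B$ whose ranks and Tamagawa numbers are used afterwards.

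The paper supplies this link in \Cref{lemma:correct-eqns-forms}, in three steps:
\begin{enumerate}
\item RM over $\QQ$ is proved independently of $f$ (\Cref{lemma:have-RM}), via a rational point on the Elkies--Kumar Hilbert modular surface and a quadratic-twist argument using $\Aut_{\overbar{\QQ}}(C)=\{\pm1\}$. Hence $J$ is modular by Ribet and Khare--Wintenberger.
\item The conductor of $J$ is computed to be $N^2$.
\item $\operatorname{Tr}(a_\ell)$ and $\Nm(a_\ell)$, obtained from $\#C(\FF_\ell)$ and $\#C(\FF_{\ell^2})$, are matched against the LMFDB. This singles out $f$ among the newforms of level $N$ with coefficient field $K_f$.
\end{enumerate}
For the same reason your Step~1 is circular as written: it deduces $\End_\QQ(A)\otimes\QQ\cong K_f$ from the unproved association. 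Your CMSV certification, which the paper mentions in a remark as an alternative, would have to establish part \ref{iii:rm-claim} on its own, before modularity can be invoked.

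\textbf{The $(\star)$ entries.} Even with this step added, your method does not reach the three entries marked $(\star)$: \LMFDBLabelMF{6962.2.a.o}, \LMFDBLabelMF{6962.2.a.q} and \LMFDBLabelMF{9510.2.a.x}. For these the conductor exponent at $2$ cannot be computed rigorously. So the Jacobian cannot be matched to a newform, and no Sturm-bound computation applies to it. The paper bypasses modular forms here and proves $A[\fp]\cong B[\fq]$ as $G_\QQ$-modules directly, by the explicit method of \cite[Section~6]{F_VEOO7ITTSGOAEC} and \cite[Propositions~20 and~22]{F_E7TITSGOG2J}.

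The real difficulty is therefore not irreducibility, which is a one-line check. It is tying the explicit models to the newforms, and handling the cases where that cannot be done.
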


If $J/\QQ$ is one of the abelian varieties $A/\QQ$ or $B/\QQ$ from \Cref{thm:congruence-classification-av}, then either $J$ is an elliptic curve or is given explicitly as either an elliptic curve or the Jacobian of genus $2$ curve $C/\QQ$ given in \Cref{tab:weier-eqns}. 
  

  \begin{lemma}
  \label{lemma:have-RM}
  Let $J$ be the Jacobian of any of the genus 2 curves $C/\QQ$ given in \Cref{tab:weier-eqns}. Then $J$ has real multiplication (defined over $\QQ$) by the maximal order $\cO_f$ in the corresponding field $K_f$ listed in \Cref{tab:weier-eqns}.
  \end{lemma}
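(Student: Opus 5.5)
The plan is to verify \Cref{lemma:have-RM} curve by curve, by explicit computation. For each genus $2$ curve $C/\QQ$ in \Cref{tab:weier-eqns} with Jacobian $J$, we want an endomorphism $\phi \in \End_\QQ(J)$, defined over $\QQ$, with $\ZZ[\phi] \cong \cO_f$. For a real quadratic field $K_f = \QQ(\sqrt{D})$ this means $\phi$ satisfies $\phi^2 - t\phi + n = 0$ with $\ZZ[x]/(x^2 - tx + n) \cong \cO_{K_f}$. The target is an \emph{equality} $\End_\QQ(J) = \cO_f$, not just an inclusion, so we need both a lower and an upper bound on the endomorphism ring.

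For the lower bound we use the heuristic-then-certify approach of Costa--Mascot--Sijsling--Voight, implemented in \textsc{Magma}.
\begin{itemize}
\item First compute a period matrix of $J$ to high precision, numerically find the endomorphism lattice, and read off a candidate generator $\phi$ of $\End(J_{\overbar{\QQ}})$ together with its tangent representation.
\item Then certify that $\phi$ is a genuine endomorphism. One way is to build the corresponding correspondence on $C \times C$ (a divisor cut out by equations over $\QQ$) and check symbolically that it induces the given action on differentials. Since the tangent representation has rational entries, $\phi$ is defined over $\QQ$.
\item Compute the minimal polynomial of the tangent representation and check that its discriminant equals $\operatorname{disc}(\cO_{K_f})$. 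This shows $\ZZ[\phi] = \cO_{K_f}$, and hence $\cO_f \subseteq \End_\QQ(J)$.
\end{itemize}

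For the upper bound there are two independent routes, and we can use either.
\begin{itemize}
\item \emph{Via modularity.} By construction (from \cite{CEHJMPV_ADOCWMJ}) each $J$ lies in the isogeny class attached to the Galois orbit $f$ with coefficient field $K_f$ of degree $2$. We verify this isogeny class by matching Euler factors for many small primes, or cite it directly. Then $\End^0_\QQ(J) \cong K_f$ is a field of degree $2$, so $\End_\QQ(J)$ is an order in $K_f$ containing $\cO_{K_f}$, and therefore equals the maximal order.
\item \emph{Via geometric simplicity.} Check that $J$ is geometrically simple, e.g.\ by finding two primes of good reduction whose reductions have Frobenius characteristic polynomials forcing non-CM, non-split behaviour. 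Then $\End^0(J_{\overbar{\QQ}})$ has rank at most $2$, so it equals $K_f$ once we have exhibited $\phi$.
\end{itemize}
The second route is also needed for \Cref{thm:examples}, which asserts geometric simplicity.

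The main obstacle is the rigorous certification of the endomorphism $\phi$, since the numerical period computation is only heuristic. If building the correspondence is too expensive, I would try an alternative upper-bound route: use Frobenius polynomials at good primes to bound the rank of the Néron--Severi group, which already gives $\End^0 \subseteq$ a quadratic field. For existence, I would instead exhibit an explicit Richelot or $(\ell,\ell)$-isogeny structure giving RM of the right discriminant. Failing that, I would fall back on the verified data of \cite{CEHJMPV_ADOCWMJ_electronic}, where these curves were produced with certified RM.
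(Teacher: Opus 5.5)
Your main route is sound, but it is not the paper's. The paper never writes down an endomorphism. Instead it exhibits a rational point on the Elkies--Kumar model of the Hilbert modular surface $Y_-(D)$, $D = \operatorname{disc}(K_f)$, corresponding to $J$. By the moduli interpretation this only produces \emph{some} twist $J'$ of $J$ with $\cO_f \hookrightarrow \End_\QQ(J')$. The paper then checks $\Aut(C_{\overbar{\QQ}}) = \{\pm 1\}$, so by Torelli $J'$ is a quadratic twist of $J$. Conjugating by the twisting isomorphism, whose cocycle takes values in the central subgroup $\{\pm 1\}$, carries $\QQ$-rational endomorphisms of $J'$ to $\QQ$-rational endomorphisms of $J$.

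Your certified-correspondence approach in the style of Costa--Mascot--Sijsling--Voight, which the paper mentions in the remark after the lemma as a possible alternative, works directly on $J$, so no twisting argument is needed. Your deduction that a rational tangent matrix (with respect to a $\QQ$-rational basis of differentials) makes $\phi$ defined over $\QQ$ is correct, by faithfulness of the tangent representation. The discriminant check does give $\ZZ[\phi] = \cO_{K_f}$. What your route buys is an explicit, certified generator of the real multiplication. What it costs is a much heavier computation: high-precision period matrices and interpolation of a correspondence on $C \times C$, for curves with coefficients of order $10^9$, with no guarantee of finishing. The paper needs only a point search on an explicit surface plus an automorphism-group check.

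Two remarks on the parts of your proposal that go beyond the lemma. First, the statement, as the paper proves and uses it, only asserts an embedding $\cO_f \hookrightarrow \End_\QQ(J)$, so your upper bound is not needed. Second, as written that upper bound has a flaw. Geometric simplicity does \emph{not} force $\End^0(J_{\overbar{\QQ}})$ to have rank $\leq 2$, since QM surfaces and surfaces with primitive quartic CM are geometrically simple. What works is, for example, two ordinary primes with geometrically simple reductions and non-isomorphic quartic Frobenius fields; this forces $\End^0(J_{\overbar{\QQ}})$ into a common subfield of degree $\leq 2$. Likewise, matching Euler factors at finitely many primes does not by itself determine the isogeny class. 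The paper does this in \Cref{lemma:correct-eqns-forms} via a conductor computation plus uniqueness among LMFDB forms, and that step is only conditional for the $(\star)$ curves. Your fallbacks (Richelot or $(\ell,\ell)$-structures, or citing the unpublished data) are not proofs of the lemma.
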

  \begin{proof}
      Let $D$ be the discriminant of $K_f$. In the file \texttt{verification/2-4-prop.m} in \cite{ELECTRONIC} we exhibit a rational point on the Hilbert modular surface $Y_{-}(D)$ in the model given by Elkies--Kumar~\cite{EK_K3SAEFHMS} which corresponds to $J$. This implies that there exists a twist $J' / \QQ$ of $J$ together with an embedding $\cO_f \hookrightarrow \End_\QQ(J')$. We check with \textsc{Magma} that $\Aut_{\overbar{\QQ}}(C) = \{ \pm 1 \}$, so that by the Torelli theorem $J'$ is a quadratic twist of $J$. Let $t \colon J \to J'$ be an isomorphism over a quadratic field $L/\QQ$ and let $\chi \colon G_\QQ \to \{ \pm 1\}$ be the quadratic character associated to $L$. Then for each $\sigma \in G_\QQ$ we have $t \sigma = \chi(\sigma) t$. Then for each $\psi' \in \End_\QQ(J')$ the endomorphism $\psi = t^{-1} \psi' t \in \End_{\overbar{\QQ}}(J)$ is defined over $\QQ$, since for each $\sigma \in G_\QQ$ we have $\psi \sigma = \chi(\sigma)^2 \sigma t^{-1} \psi' t = \sigma \psi$. 
\end{proof}  

\begin{remark}
  It may instead be possible to rigorously check the existence of the real multiplication on the abelian surfaces listed in \Cref{tab:weier-eqns} using \cite{CMSV_RCOTEROAJ}.
\end{remark}

Let $J/\QQ$ be the Jacobian of any of the genus $2$ curves $C/\QQ$ in \Cref{tab:weier-eqns}. By \Cref{lemma:have-RM} the abelian variety $J$ has real multiplication defined over $\QQ$. Thus, by work of Ribet~\cite[Theorem~4.4]{R_AVOQAMF} and Serre's conjecture~\cite{KW_SMCI,KW_SMCII}, the isogeny class of $J/\QQ$ has an associated (Galois orbit of) weight $2$ newforms $f_J \in S_2^{\textnormal{new}}(\Gamma_0(N))$. We now show that the corresponding modular form is correctly recorded in \Cref{tab:full-examples}.
 
\begin{lemma}
\label{lemma:correct-eqns-forms}
  Let $(f, C)$ be any of the pairs from \Cref{tab:weier-eqns} consisting of (the LMFDB label of) a newform $f \in S_2(\Gamma_0(N))$ and a genus $2$ curve $C/\QQ$. If the pair $(f, C)$ is not marked by a $(\star)$ in \Cref{tab:weier-eqns} (i.e., if $f$ is not \LMFDBLabelMF{6962.2.a.o}, \LMFDBLabelMF{6962.2.a.q}, or \LMFDBLabelMF{9510.2.a.x}) then $f = f_J$. The same is true for the examples \LMFDBLabelMF{6962.2.a.o}, \LMFDBLabelMF{6962.2.a.q}, or \LMFDBLabelMF{9510.2.a.x} if and only if the conductor exponent of $J$ at $2$ is equal to $2$.
\end{lemma}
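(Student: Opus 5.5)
We have to show that the newform $f_J$ attached to $J$ is the form $f$ recorded in \Cref{tab:weier-eqns}. The plan is to pin down the level of $f_J$ from the conductor of $J$, and then use a Sturm-type comparison of Hecke eigenvalues to single out $f$ among the newforms of that level.

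\textbf{Step 1: the level.} By \Cref{lemma:have-RM} and the discussion preceding the statement, $J$ is isogenous to $A_{f_J}$ for some newform $f_J \in S_2^{\textnormal{new}}(\Gamma_0(N'))$. Since $J$ is an abelian surface, its conductor is $N'^2$. We therefore compute the conductor of $J$ (equivalently of $C$) in \textsc{Magma}:
\begin{itemize}
\item at odd primes via the standard algorithms;
\item at $2$ via Dokchitser's implementation or the genus $2$ conductor machinery.
\end{itemize}
We then check that it equals $N^2$, where $N$ is the level of $f$. For the three starred cases the odd part should still be verified directly. The $2$-part is exactly what we leave as the hypothesis: the conductor exponent at $2$ equals $2$ if and only if $v_2(N') = v_2(N) = 1$, so that $N' = N$ (note that $6962 = 2\cdot 3481$ and $9510 = 2\cdot 4755$, so $v_2(N)=1$). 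This explains the ``if and only if'' in the statement. The converse direction is immediate, since $f = f_J$ forces the conductor of $J$ to be $N^2$.

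\textbf{Step 2: identifying the form.} Once the level $N$ is known, we must show that $f_J$ lies in the Galois orbit of $f$ rather than in another orbit of level $N$. For each good prime $\ell \nmid N$ we have
\[
L_\ell(J,T) = \prod_{\sigma}\bigl(1 - \sigma(a_\ell(f_J))T + \ell T^2\bigr),
\]
and the left-hand side can be computed by point counting on $C$ over $\FF_\ell$ and $\FF_{\ell^2}$. By Faltings' isogeny theorem, $A_f$ and $J$ are isogenous exactly when these Euler factors agree for all $\ell$. Since both $f$ and $f_J$ have level $N$, it suffices by the Sturm bound to compare the characteristic polynomials of $a_\ell$ for $\ell$ up to roughly $\mu(N)/6$. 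The cleanest way to do this is to apply the Sturm bound \cite[Corollary~9.19]{S_MFACA} to the difference of suitable Galois conjugates, or to traces. Equality in characteristic $0$ for all $n \le \mu(N)/6$ then forces $f_J$ to be Galois conjugate to $f$. The bad Euler factors are handled by the same twisting-free comparison as in \Cref{prop:sturm-cong}, or more simply by using the trace form $\sum_\sigma \sigma(f)$ together with the Hecke operators at bad primes.

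A cheaper alternative is to distinguish $f$ from the other newform orbits of level $N$ and dimension $\le 2$ using only finitely many $a_\ell$. Any other orbit whose Euler factors differ from those of $f$ at some small prime is ruled out by a single point count, so only $f$ survives. This avoids computing up to the full Sturm bound. It requires the complete list of newforms at level $N$, which the LMFDB provides.

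\textbf{Main obstacle.} The hard step is the rigorous conductor computation at $2$, since wild ramification makes the $2$-adic exponent delicate. In the starred cases we could not certify it, which is why the statement is conditional there. For the unstarred cases I would first try \textsc{Magma}'s genus $2$ conductor routine, falling back on Dokchitser's regular-model computations \cite{DJ_RMOC}, or on a bound for $v_2(N')$ combined with ruling out the competing levels by comparing Euler factors.
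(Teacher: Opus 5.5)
Your proposal is correct and is essentially the paper's argument. The paper first verifies, using Dokchitser--Doris, that the conductor of $J$ is $N^2$; the hypothesis on the $2$-adic exponent is invoked exactly in the starred cases, where, as you say, $v_2(N)=1$. It then computes $\#C(\FF_\ell)$ and $\#C(\FF_{\ell^2})$ at a few small good primes to get the trace and norm of $a_\ell(f_J)$, and these single out $f$ among the LMFDB's list of orbits of level $N$ with coefficient field $K_f$. This is precisely your ``cheaper alternative''.

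The Sturm-bound route in your Step 2 is unnecessary. As phrased it would also need more care: matching characteristic polynomials of $a_\ell$ prime by prime does not fix a consistent embedding of the coefficient field across different primes. So it does not directly produce a single cusp form $f_J - f^\sigma$ whose coefficients vanish up to the bound.
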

  \begin{proof}
    Using work of Dokchitser--Doris~\cite{DD_3TACOG2C,DD_electronic} we verify\footnote{
  At the time of writing (commit \texttt{bc73269}) there is a bug \url{https://github.com/cjdoris/Genus2Conductor/issues/6} in \cite{DD_electronic} which can cause incorrect values of the tame conductor exponent to be returned. 
  Upon setting \texttt{UseRegularModels:=false} this bug is avoided.
  However, this means that we are unable to compute the conductor rigorously in the cases of \Cref{tab:weier-eqns} labelled with a $(\star)$. 
  We are grateful to anonymous referees for bringing this to our attention.
  } 
  that the conductor of $J$ is equal to $N^2$ where $N$ is the level of $f$ (this is where we use the assumption on the conductor exponent if $(f,C)$ is marked with a $(\star)$). 

  The set of (Galois orbits of) weight $2$ newforms $S_2^{\textnormal{new}}(\Gamma_0(N))$ with coefficient field $K_f$ is listed in the LMFDB~\cite{LMFDB,BBBCCDLLRSV_CCMF} together with the traces $\operatorname{Tr}_{K_f/\QQ}(a_\ell)$ and norms $\Nm_{K_f/\QQ}(a_\ell)$ for those primes $\ell < 1000$. By \cite[Section~2.1]{FLSSSW_EEFTBSDCFMJOG2C} or \cite[Lemma~3]{MS_COG2WGRAF2WARWP} for each good prime $\ell$ of $C$ we have
  \begin{align*}
    \operatorname{Tr}_{K_f/\QQ} (a_\ell) =&\, \ell + 1 - n_1 \\
    \Nm_{K_f/\QQ} (a_\ell) =&\, (n_1^2 + n_2)/2 - (\ell + 1)n_1 - \ell 
  \end{align*}
  where $n_1 = \#C(\FF_\ell)$ and $n_2 = \#C(\FF_{\ell^2})$. In each case, computing the values $n_1$ and $n_2$ for a few small primes $\ell \nmid N$ suffices to show that $f$ is the unique (Galois orbit of) newforms whose trace and norm match that of $f_J$. Thus $f = f_J$.
    \end{proof}

\begin{proof}[Proof of \Cref{thm:congruence-classification-av}]
  Suppose that neither $A$ nor $B$ is one of the abelian surfaces labelled with a $(\star)$ in \Cref{tab:weier-eqns}. Let $f$ and $g$ be the newforms corresponding, by \Cref{lemma:correct-eqns-forms}, to $A$ and $B$ respectively. Computing Fourier coefficients in \textsc{Magma} \cite{MAGMA} up to the Sturm bound from \Cref{prop:sturm-cong}\ref{iii:equalities} we see that $f$ and $g$ are $p$-congruent modulo an unramified prime. In particular, by \Cref{prop:sturm-cong}\ref{iii:ss-isom} there exist unramified prime ideals $\fp \subset \cO_f$ and $\fq \subset \cO_g$ such that we have isomorphisms of semisimplified Galois representations $\bar{\rho}_{f, \fp}^{\mathrm{ss}} \cong \bar{\rho}_{g, \fq}^{\mathrm{ss}}$.

  However, in every case the Galois representation $\bar{\rho}_{f, \fp}$ is readily seen to be irreducible by exhibiting a single prime $\ell \nmid p N$ for which the characteristic polynomial of Frobenius $X^2 - a_{\ell}(f) X + \ell$ is irreducible modulo $\fp$. In particular $\bar{\rho}_{f, \fp} \cong \bar{\rho}_{g, \fq}$, as required.

  In the cases where a corresponding entry in \Cref{tab:weier-eqns} is marked with a $(\star)$ we use the direct method from \cite[Section~6]{F_VEOO7ITTSGOAEC} and \cite[Propositions~20 and 22]{F_E7TITSGOG2J} to show that $A[\fp] \cong B[\fq]$ as $G_\QQ$-modules.
\end{proof}

\section{Sieving the LMFDB for congruences between modular forms}
\label{sec:sieve}
We adapt the method that was used in \cite{CF_GMFTSTOCBEC} to determine $p$-congruences between elliptic curves for each $p \geq 7$. Let $\mathcal{S}$ denote the set of all Galois orbits of weight $2$ newforms $f \in S_2^{\textnormal{new}}(\Gamma_0(N))$ of conductor $ 1 \leq N \leq 10\,000$ and whose coefficient fields have degree $\leq 4$. This algorithm rests on the heuristic that there are very few pairs of forms $f, g \in \mathcal{S}$ which are not congruent, but which nevertheless have congruent $a_{\ell}$-values for several primes $\ell$. 

Fix a prime number $p$ and an algebraic closure $\overbar{\FF}_p$ of $\FF_p$. For a finite set $\mathcal{L}$ of prime numbers let $\mathcal{S}_\cL$ be the set of forms $f \in \mathcal{S}$ of level coprime to $\ell$ for each $\ell \in \cL$. For each form $f \in \mathcal{S}_\cL$ and prime $\fp \subset \cO_f$ above $p$ define the tuple
\begin{equation}
  \mathsf{h}_{\mathcal{L}}(f ; \fp) = \Big( \overbar{a_\ell}(f) \Big)_{\ell \in \mathcal{L}} \in \FF_\fp^{|\mathcal{L}|}
\end{equation}
where we write $\overbar{a_\ell}(f)$ for the image of $a_\ell(f)$ in $\FF_\fp$. The following lemma is immediate.

\begin{lemma}
  \label{lemma:hashes}
  If $f, g \in \mathcal{S}_\cL$ are $p$-congruent modulo an unramified prime then there exist primes $\fp \subset \cO_f$ and $\fq \subset \cO_g$ and a pair of embeddings $\sigma \colon \FF_\fp \hookrightarrow \overbar{\FF}_{p}$ and $\tau \colon \FF_\fq \hookrightarrow \overbar{\FF}_{p}$ such that $\sigma(\mathsf{h}_{\mathcal{L}}(f ; \fp)) = \tau(\mathsf{h}_{\mathcal{L}}(g ; \fq))$.  
\end{lemma}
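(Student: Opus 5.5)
The plan is to unwind the definition of $p$-congruence modulo an unramified prime until it becomes a statement about residue fields, and then read off the tuples $\mathsf{h}_\cL$. Suppose $f, g \in \mathcal{S}_\cL$ are $p$-congruent modulo an unramified prime, where $f$ has level $N$ and $g$ has level $M$. One subtlety is that \Cref{lemma:mod-p-is-enough} is stated for maximal coefficient rings, whereas $\mathcal{S}$ is not restricted in this way. I would therefore work directly from the definition, which gives the same conclusion in general.

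By definition there is a field $L$, a prime $\fm \subset \cO_L$ above $p$ with $e(\fm/p)=1$, and embeddings $K_f, K_g \hookrightarrow L$ such that $a_\ell(f) - a_\ell(g) \in \fm$ for all $\ell \nmid pNM$. Set $\fp = \fm \cap \cO_f$ and $\fq = \fm \cap \cO_g$. These are primes of $\cO_f$ and $\cO_g$ above $p$, since $\fm \cap \ZZ = p\ZZ$. The inclusions $\cO_f \subset \cO_L$ and $\cO_g \subset \cO_L$ then induce field embeddings $\FF_\fp \hookrightarrow \cO_L/\fm$ and $\FF_\fq \hookrightarrow \cO_L/\fm$. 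Fix one embedding $\iota \colon \cO_L/\fm \hookrightarrow \overbar{\FF}_p$, which exists because $\cO_L/\fm$ is a finite extension of $\FF_p$. Define $\sigma$ and $\tau$ to be the composites of $\iota$ with these two inclusions.

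Now take $\ell \in \cL$. Since $f, g \in \mathcal{S}_\cL$, the prime $\ell$ does not divide $NM$. If in addition $\ell \neq p$, the congruence gives $a_\ell(f) \equiv a_\ell(g) \pmod{\fm}$. Applying $\iota$ then yields $\sigma(\overbar{a_\ell}(f)) = \tau(\overbar{a_\ell}(g))$, which is the equality of the $\ell$-th coordinates of $\sigma(\mathsf{h}_{\mathcal{L}}(f ; \fp))$ and $\tau(\mathsf{h}_{\mathcal{L}}(g ; \fq))$.

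The only real obstacle is the case $p \in \cL$, where the definition gives no information about $a_p$. I expect the intended usage is with $p \notin \cL$, since one fixes $p$ and sieves with auxiliary primes $\ell \ne p$. If $p \in \cL$ has to be handled, I would try the following. When both forms have good reduction at $p$, the congruence of $\fm$-adic representations gives $\bar\rho_{f,\fp}^{\mathrm{ss}} \cong \bar\rho_{g,\fq}^{\mathrm{ss}}$ via Brauer--Nesbitt and Chebotarev, as in \Cref{prop:sturm-cong}. The Fontaine--Laffaille / Deligne–Fontaine description of $\bar\rho|_{G_{\QQ_p}}$ then determines $a_p \bmod \fm$, since it is the unramified eigenvalue in the ordinary case and $0$ in the supersingular case. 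Failing this, I would simply note that the lemma is applied with $p \notin \cL$.
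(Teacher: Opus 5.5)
Your argument is correct and is the same unwinding of definitions the paper intends when it calls this lemma immediate; the paper gives no further proof. As in your proposal, one takes $\fp=\fm\cap\cO_f$ and $\fq=\fm\cap\cO_g$, and composes the residue-field inclusions with an embedding $\cO_L/\fm\hookrightarrow\overbar{\FF}_p$.

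Your concern about the coordinate $\ell=p$ is a genuine point that the paper passes over. Your fallback ``$p\notin\cL$'' is not available in the paper's application, because the sieve runs over $5\le p\le 4000$ while the $\cL_i$ consist of primes between $419$ and $991$. Your local argument does close the gap, though:
\begin{itemize}
\item $p\in\cL$ forces $p\nmid NM$, so both forms have good reduction at $p$.
\item For $p\ge 5$, the ordinary and supersingular cases of $\bar\rho|_{G_{\QQ_p}}$ are distinguished on inertia.
\item In the ordinary case, $\Frob_p$ acts on the unique unramified constituent by $\bar a_p$, and in the supersingular case $\bar a_p=0$.
\end{itemize}
Together these give $a_p(f)\equiv a_p(g)\pmod{\fm}$.
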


Our approach is to build a hash table associating forms $f$ to the hash values $\sigma(\mathsf{h}_{\mathcal{L}}(f ; \fp))$. The following lemma provides a list of sets $\cL$ which we need to perform this calculation for.

\begin{lemma}
  \label{lemma:L-cover-S}
  Consider the sets $\cL_0, \dots, \cL_5$, each consisting of 15 prime numbers, given in \Cref{tab:Li}. 
  Then for each pair $f, g \in \mathcal{S}$ there exists $i \in \{0, \dots, 5\}$ such that $f,g \in \mathcal{S}_{\cL_i}$.
\end{lemma}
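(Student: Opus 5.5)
The statement is finite and combinatorial, so I would reduce it to a check on prime divisors of levels. A form $f$ of level $N$ lies in $\mathcal{S}_{\cL_i}$ exactly when $N$ is coprime to every prime in $\cL_i$. Hence $f,g \in \mathcal{S}_{\cL_i}$ if and only if $\gcd(NM, \prod_{\ell \in \cL_i} \ell) = 1$. The lemma is therefore equivalent to the following claim: for every pair of levels $N, M$ occurring in $\mathcal{S}$, there is an index $i$ such that $\cL_i$ is disjoint from the set of prime divisors of $NM$. This depends only on the levels, not the forms.

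To keep the verification small, I would first observe that we may enlarge to all pairs $N, M \le 10\,000$, or simply to all $N, M$ that actually occur as levels in $\mathcal{S}$. Only the set $P(N) \cup P(M)$ of prime divisors matters. Since $N \le 10\,000$, each $N$ has at most five distinct prime factors ($2\cdot3\cdot5\cdot7\cdot11 = 2310$, and adding $13$ exceeds $10\,000$). So $P(N)\cup P(M)$ has at most $10$ elements.

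The main step is then a covering argument. I would hope the sets $\cL_0,\dots,\cL_5$ in \Cref{tab:Li} are pairwise disjoint, or nearly so. If they are pairwise disjoint, each prime dividing $NM$ can eliminate at most one index $i$. However, up to $10$ primes against $6$ sets does not immediately give a pigeonhole bound, so the argument must use sizes. A set $\cL_i$ is "hit" only if some prime in $\cL_i$ divides $N$ or $M$. Primes in the $\cL_i$ other than the very smallest force $N$ to have few prime factors, since their product with the remaining small factors must stay below $10\,000$. So I would split according to which small primes ($2,3,5,7$) lie in which $\cL_i$, and bound how many sets a single $N \le 10\,000$ can meet. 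If each $N$ meets at most two of the $\cL_i$, then $NM$ meets at most four, leaving a good index.

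The expected obstacle is that this clean bound may fail for specific structured choices in \Cref{tab:Li}, so the honest route is a direct finite computation. For each integer $N \le 10\,000$, compute the set $T(N) = \{ i : P(N) \cap \cL_i \ne \emptyset\} \subseteq \{0,\dots,5\}$. Then check that $T(N) \cup T(M) \ne \{0,\dots,5\}$ for all pairs. This is cheap: there are at most $2^6$ distinct subsets $T(N)$, so one checks pairs of subsets that occur rather than pairs of levels. I would carry this out in \textsc{Magma} or Sage, and record it in the repository \cite{ELECTRONIC} as the proof, with the heuristic counting argument above explaining why it succeeds.
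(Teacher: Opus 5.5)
Your proposal is correct and is essentially the paper's own argument: the paper also reduces to levels and checks by brute force that every pair $(N,M) \in \{2,\dots,10\,000\}^2$ avoids some $\cL_i$. Your grouping by the index sets $T(N)$ is just a cheaper way of running the same finite check.

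Your heuristic guesses are off, although they don't affect the computation. The $\cL_i$ overlap, they contain no prime below $419$, and $N = 419$ already meets three of them. A better observation makes the check doable by hand. The product of any two primes in $\bigcup_i \cL_i$ exceeds $10\,000$, so each level has at most one prime factor there. It then suffices to read off from the table that each prime lies in at most three of the $\cL_i$, and that no two of the resulting triples (coming from $419, 439, 587, 733, 773, 983$) are complementary.
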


\begin{proof}
  By a brute force calculation in \textsc{Python} we check that for each pair of integers $(N,M) \in \{2, \dots, 10\,000\}^2$ there exists $0 \leq i \leq 5$ such that both $N$ and $M$ are coprime to all $\ell \in \cL_i$. The claim follows since the levels of forms $f \in \mathcal{S}$ are bounded by $10\,000$.
\end{proof}

Equipped with \Cref{lemma:hashes,lemma:L-cover-S} we are now able to prove \Cref{thm:congruence-classification}.

\begin{proof}[Proof of \Cref{thm:congruence-classification}]
  For each of the sets $\cL_i$ in \Cref{lemma:L-cover-S} and $5 \leq p \leq \bigp$ we compute a hash table which associates to each triple $(f, \fp, \sigma)$ the hash value $\sigma(\mathsf{h}_\cL(f; \fp))$. Combining \Cref{lemma:hashes,lemma:L-cover-S} we obtain a short list of pairs $f, g \in \mathcal{S}$ with the same hash values (for some choice of $\fp$, $\sigma$ and $\fq$, $\tau$). After comparing the remaining $a_\ell(f)$ and $a_\ell(g)$ for good primes $\ell \leq 1000$ we are left only with the examples given in \Cref{thm:congruence-classification}.

  The examples in \Cref{tab:mf-congs} are verified to be $p$-congruent by comparing the values $a_\ell(f)$ and $a_\ell(g)$ up to the bound from \Cref{prop:sturm-cong}\ref{iii:equalities}.
\end{proof}

\begin{remark}
  \label{rmk:big-p-L}
  \hfill
  \begin{enumerate}
  \item
    Note that in \Cref{lemma:L-cover-S} we could take instead a single set $\mathcal{L}$ consisting only of primes $\geq 10\,000$, after which $\mathcal{S}_\cL = \mathcal{S}$ (this is the approach taken by Cremona--Freitas~\cite{CF_GMFTSTOCBEC}). However, the $a_\ell(f)$ values used to compute $\mathsf{h}_{\mathcal{L}}(f ; \fp)$ are stored in the LMFDB for $\ell < 1000$ and we found it convenient to lookup, rather than recompute, these coefficients. 
  \item
    When $f$ has coefficient field $\QQ$ we have $\fp = (p)$ and there exists a unique embedding $\sigma \colon \FF_\fp \hookrightarrow \overbar{\FF}_p$. In particular, our hash value $\sigma(\mathsf{h}_{\cL}(f; \fp))$ collapses to that defined by Cremona--Freitas~\cite{CF_GMFTSTOCBEC}.
  \end{enumerate}
\end{remark}

\begin{table}[t]
  \centering
  \begin{tabular}{c|c}
    $i$ & $\cL_i$ \\
    \hline
    $0$ & $\{ 419, 431, 577, 587, 599, 617, 733, 773, 823, 859, 877, 883, 887, 941, 983  \}$ \\
    $1$ & $\{ 419, 439, 541, 569, 587, 641, 709, 727, 751, 769, 773, 821, 827, 859, 971  \}$ \\
    $2$ & $\{ 419, 461, 569, 577, 601, 641, 701, 719, 733, 751, 887, 907, 919, 971, 983  \}$ \\
    $3$ & $\{ 439, 461, 617, 631, 691, 733, 739, 757, 787, 811, 827, 919, 937, 947, 983  \}$ \\
    $4$ & $\{ 439, 503, 587, 647, 683, 701, 709, 727, 739, 757, 797, 829, 839, 853, 929  \}$ \\
    $5$ & $\{ 467, 541, 683, 691, 761, 773, 811, 829, 853, 863, 929, 937, 947 , 953, 991 \}$ \\
  \end{tabular}
  \caption{The sets of prime numbers $\cL_0, \dots, \cL_5$ from \Cref{lemma:L-cover-S}}
  \label{tab:Li}
\end{table}

\section{Filtering for visible \texorpdfstring{$\Sha$}{III} and \texorpdfstring{\Cref{thm:examples,thm:examples-vis}}{Theorems 1.1 and 1.4}}
\label{sec:filter}
Let $K$ be a number field and let $A/K$ and $B/K$ be abelian varieties. Let $\Delta/K$ be a finite group scheme and suppose that there exist closed immersions of group schemes $\Delta \hookrightarrow A$ and $\Delta \hookrightarrow B$ and let $Z = (A \times B)/\Delta$ (with $\Delta$ embedded diagonally). Consider the isogenies $\phi \colon A \to A'$ and $\psi \colon B \to B'$ whose kernel is $\Delta$.

\begin{theorem}[{\cite[Theorem~3.1]{AS_VOSTGOAV}, \cite[Theorem~2.2]{F_VEOO7ITTSGOAEC}}]
  \label{thm:vis-crit}
   If $d = |\Delta|$ is odd, $A'(K)/\phi(A(K))=0$ and: 
  \begin{enumerate}[label=(\roman*)]
  \item \label{i:vis-crit-tama}    
    the Tamagawa numbers of $A$ and $B'$ are coprime to $d$,
  \item \label{i:vis-crit-good}
    both $A$ and $B$ have good reduction at places $v \mid d$, and
  \item \label{i:vis-crit-ram}
    at finite places $v \mid d$ we have $e(K_v/\QQ_p) < p-1$.
  \end{enumerate}
  Then $\Vis_Z \Sha(A/K) \cong B'(K)/\psi(B(K))$.
\end{theorem}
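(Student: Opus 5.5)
The plan is the standard Agashe--Stein/Fisher argument: compare the long exact cohomology sequences of the two short exact sequences coming from $Z$, and then check that the local conditions defining $\Sha$ impose no constraint on the visible classes. The quotient map from $A\times B$ gives
\[
0 \to A \xrightarrow{\iota} Z \to B' \to 0 ,
\]
where the map $Z \to B'$ is induced by $(a,b)\mapsto \psi(b)$. Its kernel is the image of $A\times\{0\}$, and this is isomorphic to $A$ because $\Delta\cap (A\times 0)=0$. Taking Galois cohomology gives
\[
Z(K)\to B'(K) \xrightarrow{\delta} H^1(K,A) \xrightarrow{\iota_*} H^1(K,Z).
\]
Hence $\Vis_Z H^1(K,A)=\delta(B'(K)) \cong B'(K)/\mathrm{im}\, Z(K)$. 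I will compare this with the sequence for $0\to B\to Z\to A'\to 0$ (here $Z\to A'$ is $(a,b)\mapsto\phi(a)$). Concretely, $\mathrm{im}(Z(K)\to B'(K))$ consists of those $\psi(b)$-classes that lift. Chasing the diagram through the sequence $0\to\Delta\to B\to B'\to 0$ and using $A'(K)/\phi(A(K))=0$ should give $\mathrm{im}\, Z(K)=\psi(B(K))$. Indeed, a point $x\in B'(K)$ lifts to $Z(K)$ iff there is $(a,b)$ with $\psi(b)=x$ and $(a,b)$ Galois-invariant mod $\Delta$. The obstruction is that the cocycles $\sigma\mapsto \sigma b-b$ and $\sigma\mapsto\sigma a-a$ must agree in $H^1(K,\Delta)$. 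The first is $\delta_\psi(x)$, and the second ranges over $\delta_\phi(A'(K))$, which is $0$ by hypothesis. So $x$ lifts iff $x \in \psi(B(K))$. Therefore $\Vis_Z H^1(K,A)\cong B'(K)/\psi(B(K))$, and it remains to show that all of these classes lie in $\Sha(A/K)$.

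For the local triviality I use that $\delta(x)$ is the image of $\delta_\psi(x)\in H^1(K,\Delta)$ under $\Delta\hookrightarrow A$. So it suffices to show that for every place $v$ the localisation $\mathrm{res}_v\,\delta(x)$ vanishes in $H^1(K_v,A)$. The element $\delta(x)$ is killed by $d$, since it comes from $H^1(K,\Delta)$. I split into three cases.

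\emph{Archimedean places}: $H^1(K_v,A)$ is $2$-torsion and $d$ is odd, so the class is zero.

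\emph{Finite $v\nmid d$}: the class $\delta_\psi(x)$ is unramified wherever $B$ has good reduction. At a bad place, the image of $B'(K_v)$ in $H^1(K_v,A)$ via the local version of $\delta$ factors through $B'(K_v)/\psi(B(K_v))$. Since $v\nmid d$, this group has order dividing the $d$-part of the Tamagawa quotient, namely $c_v(B')$ together with a contribution from $c_v(A)$ through the image in $H^1(K_v,A)$. More precisely, I expect to use that $H^1(K_v,A)[d]$ for $v\nmid d$ is dual to $A^\vee(K_v)/d$, and that $A^\vee(K_v)/d \cong \Phi_v/d$ has order prime to $d$ by hypothesis \ref{i:vis-crit-tama}. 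So the class dies. To be safe I would instead argue directly: $B'(K_v)/\psi(B(K_v))$ is controlled by component groups when $v\nmid d$, and hypothesis \ref{i:vis-crit-tama} makes it trivial.

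\emph{Finite $v\mid d$}: this is the main obstacle. Here both $A$ and $B$ have good reduction, so $\Delta$ extends to a finite flat group scheme over $\mathcal{O}_v$ (the closure in the Néron model). Since $e<p-1$, Raynaud's theorem says the extensions are unique and the isogeny $\psi$ extends to a flat isogeny of abelian schemes. Then the local image of $\delta_\psi$ lands in the flat cohomology $H^1_{\mathrm{fl}}(\mathcal{O}_v,\Delta)$, and its image in $H^1(K_v,A)$ lies in the image of $H^1_{\mathrm{fl}}(\mathcal{O}_v,\mathcal{A})=0$ by Lang's theorem and Hensel's lemma. I expect the delicate point to be justifying that the closures of $\Delta$ in the two Néron models agree. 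This is exactly where condition \ref{i:vis-crit-ram} (Raynaud) is needed.
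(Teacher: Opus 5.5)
Your global step is correct: a point $x\in B'(K)$ lifts to $Z(K)$ if and only if $\delta_\psi(x)\in\delta_\phi(A'(K))=0$, which identifies $\Vis_Z H^1(K,A)$ with $B'(K)/\psi(B(K))$. This is simply a proof of the Fisher lemma the paper cites. Your archimedean case is also fine, and so is your Raynaud/flat-cohomology treatment of $v\mid d$, which repackages Agashe--Stein's Case 3.

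\textbf{The gap is at finite places $v\nmid d$.} Both arguments you offer there rest on false claims.
\begin{enumerate}
\item $A^\vee(K_v)/d$ is not $\Phi_v/d$. The kernel of reduction is pro-$\ell$ and hence $d$-divisible, but the identity component still contributes $\mathcal{A}^{\vee,0}_k(k)/d$. For example, at a good place $A^\vee(K_v)/d\cong\widetilde{A}^\vee(k)/d$, which is typically nonzero. So $H^1(K_v,A)[d]$ is in general nonzero, and knowing that the class is $d$-torsion does not kill it.
\item $B'(K_v)/\psi(B(K_v))$ is not made trivial by hypothesis (i). For $v\nmid d$ its order is $\#\Delta(K_v)\cdot c_v(B')/c_v(B)$, which is nonzero whenever $\Delta(K_v)\neq 0$.
\end{enumerate}
The local map $B'(K_v)\to H^1(K_v,\Delta)$ really is nonzero at such places. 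Only its composite with $H^1(K_v,\Delta)\to H^1(K_v,A)$ vanishes, and your argument never explains why. Even your remark that $\delta_\psi(x)$ is unramified where $B$ has good reduction does not finish that subcase, since $A$ may have bad reduction at $v$.

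\textbf{The missing idea.} Show that this specific class becomes unramified after multiplying by $c_v(B')$, and then use that unramified classes in $H^1(K_v,A)$ are killed by $c_v(A)$.
\begin{enumerate}
\item The point $c_v(B')x$ lies in $B'^0(K_v)$.
\item Since $v\nmid\deg\psi$, the map $\psi\colon B^0(K_v^{\mathrm{nr}})\to B'^0(K_v^{\mathrm{nr}})$ is surjective. So $c_v(B')x=\psi(Q)$ with $Q\in B^0(K_v^{\mathrm{nr}})$.
\item Hence $c_v(B')\pi_v(x)$ is represented by the unramified cocycle $\sigma\mapsto Q^\sigma-Q$.
\item The unramified subgroup $H^1(K_v^{\mathrm{nr}}/K_v,A(K_v^{\mathrm{nr}}))$ has order $c_v(A)$. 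Lang's theorem on the identity component of the Néron special fibre reduces it to $H^1$ of the component group.
\item Therefore $c_v(A)c_v(B')\pi_v(x)=0$. Combined with $d\,\pi_v(x)=0$ and hypothesis (i), this gives $\pi_v(x)=0$.
\end{enumerate}
This is exactly where both Tamagawa hypotheses enter; your proposal never actually uses $c_v(B')$.

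Your duality route can be repaired along the same lines, since the unramified subgroup is the exact annihilator of $A^{\vee,0}(K_v)$. But that only works after unramifiedness of $c_v(B')\pi_v(x)$ has been established, and that is the real content of this case.
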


\begin{remark}
  \label{rmk:B-prime}
  Note the presence of $B'$ in \Cref{thm:vis-crit}\ref{i:vis-crit-tama}. In \cite[Theorem~2.2]{F_VEOO7ITTSGOAEC} this is omitted (presumably this is a typographical error in \cite{F_VEOO7ITTSGOAEC}). A corrected proof is given in \Cref{sec:theorem-agashe-stein}. This does not play a role in \cite{F_VEOO7ITTSGOAEC} since all isogenies therein are endomorphisms.
\end{remark}

In order to apply \Cref{thm:vis-crit} to construct elements of $\Sha(A/\QQ)$ we require two main conditions:
\begin{enumerate}[label=(\arabic*)]
\item \label{i:cong-needed}
  there exists an abelian variety $B/\QQ$ with a finite subgroup scheme $\Delta \subset B$ embedding in $A$, and
\item \label{i:rank-needed}
  that $\rank(A/\QQ) = 0$ and $\rank(B/\QQ) \geq 1$,
\end{enumerate}
in addition to the local criteria \ref{i:vis-crit-tama}--\ref{i:vis-crit-good} (when $K = \QQ$ condition \ref{i:vis-crit-ram} is automatically satisfied).

Pairs of simple abelian varieties $A/\QQ$ and $B/\QQ$ satisfying \ref{i:cong-needed} are provided to us by \Cref{thm:congruence-classification-av}. In this case, $A$ and $B$ have real multiplication by maximal orders $\cO_A$ and $\cO_B$ in a pair of totally real fields $K_A$ and $K_B$, and $A$ and $B$ are $(\fp,\fq)$-congruent over $\QQ$ for some prime ideals $\fp \subset \cO_A$ and $\fq \subset \cO_B$, and have associated newforms $f \in S_2^{\textnormal{new}}(\Gamma_0(N))$ and $g \in S_2^{\textnormal{new}}(\Gamma_0(M))$ respectively (where $N,M \leq 10\,000$).
\\

\filter{good reduction} \label{fil:good}
In light of \Cref{thm:vis-crit}\ref{i:vis-crit-good} we discard pairs for which either $N$ or $M$ is divisible by the prime $p$ below $\fp$ and $\fq$.

\begin{remark}
  As demonstrated by Fisher in several examples in \cite[Theorem~3.3]{F_VEOO7ITTSGOAEC} it is sometimes possible to apply \Cref{thm:vis-crit} in cases when $p$ divides $N$ or $M$ by passing to a ramified extension of $\QQ_p$ over which $A$ and $B$ obtain good reduction. In particular, this filter may be finer than necessary. We do not investigate this further. 
\end{remark}

\filter{rank discrepancy} \label{fil:rk-disc}
Now, since $A$ is simple, its $L$-function decomposes as a product
\[
  L(A, s) = \prod_{\sigma \colon K_A \hookrightarrow \CC} L(f^\sigma, s)
\]
and similarly for $B$. In particular we have $\rank_{\textnormal{an}}(A) = [K_A : \QQ] \rank_{\textnormal{an}}(f)$ and $\rank_{\textnormal{an}}(B) = [K_B : \QQ] \rank_{\textnormal{an}}(g)$.

Each newform $f \in S_2^{\textnormal{new}}(\Gamma_0(N))$ of level $N \leq 10\,000$ and coefficient field of degree $\leq 2$ has its (numerical) analytic rank recorded in the LMFDB~\cite{LMFDB}. It is therefore simple to apply the filter that $\rank_{\textnormal{an}}(f) = 0$ and $\rank_{\textnormal{an}}(g) \geq 1$ to provide a shortlist of candidates for abelian varieties $A/\QQ$ with non-trivial $\Sha(A/\QQ)$.
\\

\filter{$\fq$ is a principal ideal} \label{fil:princ}
After applying Filters~\ref{fil:good} and \ref{fil:rk-disc} we can be confident (by the Birch and Swinnerton-Dyer conjecture) that if the Tamagawa number criterion \Cref{thm:vis-crit}\ref{i:vis-crit-tama} is satisfied, then $\Sha(A/\QQ)[\fp]$ is non-trivial. That is, we require that the Tamagawa numbers of $A$ and $B'$ are coprime to $p$.

In each case $A$ is presented to us as the Jacobian of a curve $C/\QQ$. In particular, we may (at least in principle) compute the Tamagawa numbers of $A$ by computing the minimal regular models for these curves at each bad prime.

On the other hand, the subgroup scheme $B[\fq] \subset B$ is not necessarily maximal isotropic with respect to the Weil pairing. In particular, although the abelian variety $B$ is presented to us as the Jacobian of a curve, the isogeny $\psi$ does not naturally equip $B'$ with a principal polarisation (and therefore cannot easily be presented as the Jacobian of a curve). In this case we do not know of a computationally feasible way to extract the Tamagawa numbers of $B'$.

However if $\fq = (\eta)$ is principal, then $\psi$ is the multiplication-by-$\eta$ endomorphism of $B$ and therefore $B' \cong B$. We therefore impose the further filter that $\fq$ is a principal ideal.
\\

\filter{Tamagawa numbers} \label{fil:tama}
The first three filters yield a shortlist of pairs of Jacobians $A/\QQ$ and $B/\QQ$ for which, to apply \Cref{thm:vis-crit}, it remains to check that the Tamagawa numbers of $A$ and $B$ are coprime to $p$.

If $J = A$ or $B$ is an elliptic curve we compute the Tamagawa numbers of $J$ using Tate's algorithm as implemented in \textsc{Magma}~\cite{MAGMA} and \textsc{Sagemath}~\cite{sagemath}.

We now describe the case when $J = A$ or $B$ has dimension $2$. We require that $J$ is known (by \cite{CEHJMPV_ADOCWMJ,CEHJMPV_ADOCWMJ_electronic}) to be the Jacobian of a genus $2$ curve $C/\QQ$. Let $\ell$ be a prime number and let $\Phi_{\ell}$ denote the component group of the Néron model of $J$ at $\ell$. We write $c_{\ell}(J) = | \Phi_{\ell}(\FF_\ell) |$ for the Tamagawa number and $\overbar{c}_{\ell}(J) = | \Phi_{\ell}(\overbar{\FF}_\ell) |$ for the order of the geometric component group. 

There exist several practical methods for determining the orders $\overbar{c}_{\ell}(J)$ (and often the Tamagawa numbers $c_\ell(J)$). Note that $c_\ell(J)$ divides $\overbar{c}_\ell(J)$, so it suffices for our application to show that $\overbar{c}_\ell(J)$ is coprime to $p$. 

\subsubsection*{General algorithms}
We first have the following general algorithms which apply to genus $2$ curves $C/\QQ$ (but which may not always return).
\begin{itemize}[itemsep=1mm,leftmargin=7mm]
\item
  Liu's program \texttt{genus2reduction} in \textsc{PARI/gp}~\cite{PARI} (which is available through \textsc{Sagemath}) computes $\overbar{c}_\ell(J)$.
\item
  Donnelly's \textsc{Magma} functions \texttt{RegularModel} and \texttt{ComponentGroup} to compute $\overbar{c}_\ell(J)$ (these were extended by van~Bommel~\cite{vB_ECOBSDIIG2} to compute $c_\ell(J)$).
\item
  Dokchitser--Jakovac's~\cite{DJ_RMOC} algorithm\footnote{The work \cite{DJ_RMOC} remains in preparation and we are grateful to Tim Dokchitser for performing these calculations for us. Cases in \Cref{tab:full-examples} which rely on these (unpublished) results are marked with a dagger $(\dagger)$ (these are excluded from \Cref{thm:examples}).} to compute $\overbar{c}_\ell(J)$.
\end{itemize}

When $\ell = 2$ we also mention recent work of Liu~\cite{L_DODCORS} which computes the minimal regular model of a hyperelliptic curve $C/\QQ_2$ and therefore yields $c_\ell(J)$ (according to \cite{L_DODCORS}, implementation is ongoing). If these programs fail to return an answer we also have the following approaches which can be applied depending on whether $\ell^2 \mid N$. 

\subsubsection*{The case \texorpdfstring{$\ell \parallel N$}{l||N}}
In this case $J$ has semistable (indeed, totally toric) reduction at $\ell$ (see \cite[Section~7.1]{BS_CGOPTQ}). In particular, we have access to the following additional methods.
\begin{itemize}[itemsep=1mm,leftmargin=7mm]
\item
  When $\ell$ is odd the Tamagawa number $c_\ell(J)$ can be computed from the cluster picture of $C$~(see \cite{M2D2}, \cite[Theorem~3.0.2]{B_VOTNOJOHCWSR}, \cite[Section~10]{hyperuser}) as implemented in \textsc{Sagemath}~\cite{clusters}.
\item
  If $\widetilde{J}$ is an optimal\footnote{A quotient $J_0(N) \to \widetilde{J}$ is said to be \emph{optimal} if the kernel is connected.} quotient of the modular Jacobian $J_0(N)$  an algorithm of Kohel--Stein~\cite[Section~3]{KS_CGOQOJ0} computes $\overbar{c}_\ell(\widetilde{J})$ from the corresponding newform (this is implemented in \textsc{Magma} as the function \texttt{ComponentGroupOrder}). If $\fp$ is principal and both $J[\fp]$ and $J[\overbar{\fp}]$ are irreducible then $p \mid \overbar{c}_\ell(\widetilde{J})$ if and only if $p \mid \overbar{c}_\ell(J)$, since there exists an isogeny $J \to \widetilde{J}$ of degree coprime to $p$ (all isogenies of degree divisible by $p$ factor as $\psi \circ \varphi$ where $\psi$ has degree coprime to $p$ and $\varphi$ is a composition of the endomorphisms $\fp$ and $\overbar{\fp}$).
\end{itemize}

When $\ell = 2$ work Gehrunger~\cite[Figure~10]{G_TCOTSMROG2CIRC2} (implemented in \textsc{Sagemath} \cite{Gehrunger_electronic}) computes the stable model of $C/\overbar{\mathbb{Q}}_2$ together with the ``thicknesses''\footnote{Note that by convention the ``thicknesses'' in \cite{G_TCOTSMROG2CIRC2} is normalised with respect to the valuation on $\overbar{\QQ}_2$ with $v(2) = 1$.} of the nodes. It may be possible to obtain the minimal regular model from this through repeated applications of \cite[Corollary~10.3.25]{L_AGAAC}.

\subsubsection*{The case \texorpdfstring{$\ell^2 \mid N$}{l{\textasciicircum}2 | N}}
In this case we either fall back on more general algorithms or use the following result when $p > 2\dim(J) + 1$.
\begin{itemize}[itemsep=1mm,leftmargin=7mm]
\item
  If $p > 2 \dim(J) + 1$ a theorem of Lenstra--Oort~\cite[Theorem~1.13]{LO_AVHPAR} implies (by the discussion in \cite[Section~3.7]{AS_VEFTBSDCFMAVOAR0}) that $p$ is coprime to $c_\ell(J)$ (cf. \cite{S_TNFOAVWPGR} for the case when $A$ has potentially good reduction).
\end{itemize}

\vspace{0.5em}
After applying the Filters \ref{fil:good}--\ref{fil:tama} we are left with the examples recorded in \Cref{tab:full-examples}. 

\begin{proof}[Proof of \Cref{thm:examples-vis}]
  Each of the abelian surfaces is checked to be geometrically simple by applying the criterion in \cite[Section~14.4]{CF_PTAMAOCOG2} and \cite{S_TS2DAVDOQWMWGORAL19}. Using \textsc{Magma} we check that $A(\QQ)[p] = B(\QQ)[p] = 0$. To prove \Cref{thm:examples-vis} it remains to check that in each case the (algebraic) ranks of $A$ and $B$ are correctly recorded in \Cref{tab:full-examples}. Upper bounds on the ranks are found via a $2$-descent (\texttt{RankBounds} in \textsc{Magma}) and lower bounds are found by a point search (\texttt{Points} in \textsc{Magma}). 
  In every case $\rank(B) = 2\dim(B)$ and therefore $B(\QQ)$ is an $\cO_B$-module of rank $2$. Since $\fq$ is principal and has residue degree $1$ we have $(\ZZ/p\ZZ)^2 \cong B(\QQ)/\fq B(\QQ)$. Since $A[\fp]$ is a direct summand of $A[p]$ we have $A'(\QQ)[p] = A(\QQ)[p] = 0$. Thus $A'(\QQ)/\fp A(\QQ) = 0$ because $A$ has rank $0$. It follows from \Cref{thm:vis-crit} that 
  \[
  (\ZZ/p\ZZ)^2 \cong B(\QQ)/\fq B(\QQ) \cong \Vis_Z \Sha(A/\QQ) 
  \]
  as required.
\end{proof}

\begin{proof}[Proof of \Cref{thm:examples}]
  This follows immediately from \Cref{thm:examples-vis} since the examples in \Cref{table:examples} consist only of those in \Cref{tab:full-examples} which are marked with neither a $(\dagger)$ or a $(*)$.
\end{proof}

\section{On \texorpdfstring{\Cref{conj:invisible-7}}{Conjecture 1.5}}
\label{sec:invis}
Let $A/\QQ$ be the abelian surface with label \LMFDBLabelMF{9603.2.a.o} and let $B/\QQ$ be the abelian surface with label \LMFDBLabelMF{9603.2.a.m}. By \Cref{lemma:have-RM} both $A$ and $B$ have real multiplication by $\ZZ[\sqrt{2}]$ defined over $\QQ$.

By \Cref{thm:congruence-classification-av} we may choose the embeddings $\ZZ[\sqrt{2}] \hookrightarrow \End_\QQ(A)$ and $\ZZ[\sqrt{2}] \hookrightarrow \End_\QQ(B)$ so that $A[\fp] \cong B[\fp]$ where $\fp = (3 + \sqrt{2})$. By \Cref{thm:examples-vis} the group $\Sha(A/\QQ)[\fp] \subset \Sha(A/\QQ)[7]$ contains a subgroup isomorphic to $(\ZZ/7\ZZ)^2$.
Note that, since $A[\fp]$ is irreducible, if an element $\xi \in \Sha(A/\QQ)[\fp]$ is visible in an abelian three-fold then there exists an elliptic curve $E/\QQ$ of rank at least $1$ which is $(7, \fp)$-congruent to $A$.

By \cite[Section~4.4]{PSS_TOX7APSTX2Y3Z7} (see \cite[Lemmas~7 and 8]{F_E7TITSGOG2J}) there exist twists $X_{A[\fp]}^+(7)$ and $X_{A[\fp]}^-(7)$ of the (connected) modular curve of full level $7$ whose $K$-points parametrise elliptic curves $E/K$ that are $(7,\fp)$-congruent to $A$ over $K$. Using the algorithms from \cite[Sections~3~and~4]{F_E7TITSGOG2J} we compute these twists explicitly as
\begin{flalign*}
    X_{A[\fp]}^{\pm}(7) :&\; 71 x^4 + 62 x^3 y - 42 x^3 z + 9 x^2 y^2 + 3 x^2 y z - 42 x^2 z^2 - 38 x y^3 + 39 x y^2 z &\\
    &+ 9 x y z^2 + 33 x z^3 - 6 y^4 + 25 y^3 z - 6 y^2 z^2 + 13 y z^3- 9 z^4, &\\
    X_{A[\fp]}^{\mp}(7) :&\; x^4 + 11 x^3 z + 15 x^2 y^2 + 21 x^2 y z + 9 x^2 z^2 + 21 x y^3 + 21 x y^2 z - 15 x y z^2 &\\
    &- 9 x z^3 + 6 y^4 + 41 y^3 z + 15 y^2 z^2 - 4 y z^3 - 15 z^4, &
\end{flalign*}
for some choice of sign.

Searching for rational points up to moderate height on these twisted modular curves yields the point $(-2:1:2) \in X_{A[\fp]}^\pm(7)(\QQ)$. We were unable to find any further points on either $X_{A[\fp]}^{\pm}(7)(\QQ)$ or $X_{A[\fp]}^{\mp}(7)(\QQ)$. The point above corresponds to the elliptic curve $(7,\fp)$-congruent to $A/\QQ$ 
\begin{align*}
E &: y^2 + y = x^3 - 28153533873 x + 1818290903847853.
\end{align*}

We check with a descent calculation (using the function \texttt{Rank} in \textsc{Magma}~\cite{MAGMA}) that $E(\QQ) = 0$. It follows immediately that the non-trivial elements of $\Sha(A/\QQ)[\fp]$ are not made visible by the congruence with $E$.
\Cref{conj:invisible-7} follows if one can prove the following conjecture.

\begin{conj}
    We have $X_{A[\fp]}^{\pm}(7)(\QQ) = \{ (-2 : 1 : 2) \}$ and $X_{A[\fp]}^{\mp}(7)(\QQ) = \emptyset$.
\end{conj}

It may be possible to prove that $X_{A[\fp]}^{\pm}(7)(\QQ) = \{ (-2 : 1 : 2) \}$ by applying the $2$-descent as in \cite[Section~11]{PSS_TOX7APSTX2Y3Z7} together with Chabauty's method. On the other hand we do not know of an approach to prove the non-existence of rational points on $X_{A[\fp]}^{\mp}(7)$ since there is no local obstruction (as is readily checked in \textsc{Magma}).

\appendix
\section{On the theorem of Agashe--Stein and Fisher}
\label{sec:theorem-agashe-stein}

The following proof of \Cref{thm:vis-crit} is almost identical to \cite[Theorem~3.1]{AS_VOSTGOAV} and \cite[Theorem~2.2]{F_VEOO7ITTSGOAEC} with the distinction that we allow an arbitrary finite subgroup $\Delta$ (whereas \cite{AS_VOSTGOAV} allows only the case $\Delta = B[n]$) and we correct a small typographical error from \cite{F_VEOO7ITTSGOAEC} (see \Cref{rmk:B-prime}).

\begin{proof}[Proof of \Cref{thm:vis-crit}]
Under the hypotheses of the theorem \cite[Lemma~2.1]{F_VEOO7ITTSGOAEC} implies that $\Vis_Z H^1(K, A) \cong B'(K)/\psi(B(K))$. It remains to show that the classes in this subgroup are contained in $\Sha(A/K)$. By \cite[(2.1)]{F_VEOO7ITTSGOAEC} for each place $v$ of $K$ we have the following commutative diagram with exact rows:
\[
\begin{tikzcd}
  & B(K_v) \arrow[r,"\psi"] \arrow[d] & B'(K_v) \arrow[r] \arrow[d,equal] & H^1(K_v,\Delta) \arrow[d]\\
  & Z(K_v) \arrow[r] & B'(K_v) \arrow[r,"\pi_v"] & H^1(K_v,A) \arrow[r,"{\iota}_*"] & H^1(K_v,Z).
\end{tikzcd}
\] 
We claim that $\pi_v = 0$ at each place $v$ of $K$. The claim suffices to prove the theorem, since by definition we have $\Vis_Z H^1(K, A) = \ker {\iota}_{*}$.

Recall that we write $d = | \Delta |$. Since $\pi_v$ factors through the group $H^1(K_v, \Delta)$ (which is annihilated by multiplication by $d$) it follows that $d \cdot \pi_v = 0$.

If $v$ is an infinite place then $H^1(K_v,A)$ is killed by multiplication by $2$, but by assumption $d$ is odd, and the claim follows. Thus, suppose that $v$ is a finite place of $K$. Let $K_v^{\text{nr}}$ be the maximal unramified extension of $K_v$. For an abelian variety $J/K_v$ we write $J^0(K_v^{\text{nr}}) \subset J(K_v^{\text{nr}})$ for the set of points whose reduction modulo $v$ lie in the identity component of the special fibre of the Néron model of $J/K_v$. We now recall the following fact from \cite[Section~3.4]{AS_VOSTGOAV}.

\begin{lemma}
  \label{lemma:what-are-the-cp}
  For any finite place $v$ of $K$ the Tamagawa number $c_v(A)$ is equal to the order of the unramified subgroup of $H^1(K_v, A)$.
\end{lemma}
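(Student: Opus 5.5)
The plan is to use the standard description of the unramified subgroup via Lang's theorem and Galois cohomology of the component group of the Néron model. Write $\mathcal{A}$ for the Néron model of $A$ over $\cO_{K_v}$, $k_v$ for the residue field, $\mathcal{A}^0$ for its identity component and $\Phi = \mathcal{A}_{k_v}/\mathcal{A}^0_{k_v}$ for the component group. Let $I_v \subset G_{K_v}$ be the inertia group, so that $\Gal(K_v^{\mathrm{nr}}/K_v) \cong \Gal(\overbar{k}_v/k_v) \cong \hat{\ZZ}$. By definition the unramified subgroup is
\[
H^1_{\mathrm{nr}}(K_v, A) = \ker\bigl(H^1(K_v, A) \to H^1(K_v^{\mathrm{nr}}, A)\bigr) = H^1\bigl(\Gal(K_v^{\mathrm{nr}}/K_v), A(K_v^{\mathrm{nr}})\bigr),
\]
where the second equality comes from the inflation--restriction sequence.

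The key steps, in order, are as follows.

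First, use the Néron mapping property to identify $A(K_v^{\mathrm{nr}})$ with $\mathcal{A}(\cO_{K_v}^{\mathrm{nr}})$. This gives the exact sequence
\[
0 \to A^0(K_v^{\mathrm{nr}}) \to A(K_v^{\mathrm{nr}}) \to \Phi(\overbar{k}_v) \to 0,
\]
where surjectivity follows from Hensel's lemma applied to the smooth scheme $\mathcal{A}$ over the strictly henselian ring $\cO_{K_v}^{\mathrm{nr}}$.

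Second, show that $H^1(\Gal(K_v^{\mathrm{nr}}/K_v), A^0(K_v^{\mathrm{nr}})) = 0$, and also the vanishing of $H^2$ where it is needed. For $H^1$, filter $A^0(K_v^{\mathrm{nr}})$ by the kernel of reduction. The quotient $\mathcal{A}^0_{k_v}(\overbar{k}_v)$ has trivial $H^1$ by Lang's theorem, since $\mathcal{A}^0_{k_v}$ is a connected smooth group over a finite field. The kernel of reduction is a pro-system of formal-group layers, each isomorphic to $\mathrm{Lie}(\mathcal{A}_{k_v})\otimes \overbar{k}_v$, so each layer is $\overbar{k}_v$-vector space valued with trivial $H^1$ by additive Hilbert 90. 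Completeness then passes this to the limit.

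Third, take the long exact sequence in $\Gal(\overbar{k}_v/k_v)$-cohomology. This gives $H^1(K_v^{\mathrm{nr}}/K_v, A) \hookrightarrow H^1(k_v, \Phi(\overbar{k}_v))$, with cokernel inside $H^2(\hat{\ZZ}, A^0(K_v^{\mathrm{nr}}))$. Since $\hat{\ZZ}$ has cohomological dimension $1$ on torsion modules, I would instead argue surjectivity directly: a class in $H^1(k_v,\Phi)$ is represented by $\Frob \mapsto c$ with $c \in \Phi(\overbar{k}_v)$, and lifting $c$ to $A(K_v^{\mathrm{nr}})$ defines a cocycle on procyclic $\hat{\ZZ}$. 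This works because continuous cocycles on $\hat{\ZZ}$ with values in a discrete torsion module are determined by the image of $\Frob$. Here $A(K_v^{\mathrm{nr}})$ is not torsion, so I would handle this via the torsion/divisibility of $A^0(K_v^{\mathrm{nr}})$ modulo the vanishing above, or simply cite Milne, \emph{Arithmetic Duality Theorems}, Prop.~I.3.8. This yields $H^1_{\mathrm{nr}}(K_v,A)\cong H^1(k_v,\Phi(\overbar{k}_v))$.

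Finally, for a finite $\hat{\ZZ}$-module $M$ with generator $F$, we have $H^1 = M/(F-1)M$ and $H^0 = \ker(F-1)$, so the two have equal order. Hence $\#H^1(k_v,\Phi) = \#\Phi(k_v) = c_v(A)$.

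I expect the main obstacle to be the second and third steps: the vanishing of $H^1$ (and, effectively, of $H^2$) of the identity component over $K_v^{\mathrm{nr}}$, and the exactness of the $\Phi$ sequence. Since this is a standard fact (as recalled from \cite[Section~3.4]{AS_VOSTGOAV}), I would lean on the literature, namely Milne's Prop.~I.3.8, rather than redo the formal-group analysis in full.
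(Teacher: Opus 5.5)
Your proposal is correct and follows the argument behind this statement. The paper gives no proof of its own: it recalls the fact from Agashe--Stein (Section~3.4), who obtain it exactly as you do. There, Milne's Proposition~I.3.8 identifies the unramified subgroup with $H^1(k_v,\Phi(\overbar{k}_v))$, and for a finite module over the procyclic group $\Gal(\overbar{k}_v/k_v)$ one has $\#H^1=\#H^0=\#\Phi(k_v)=c_v(A)$.

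If you want to avoid the citation, there is one point to fix. $\cO_{K_v}^{\mathrm{nr}}$ is not complete, so "completeness then passes this to the limit" does not work as stated. Run your filtration argument over finite unramified extensions $L/K_v$, where $\cO_L$ is complete, and then pass to the direct limit. At each finite level the same filtration also gives $H^2=0$, because $k_L$ is an induced module and finite modules have trivial Herbrand quotient. That $H^2$-vanishing is what gives surjectivity onto $H^1(k_v,\Phi)$.
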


First if $v \mid d$ the argument in \cite[Section~3.4, Case 3]{AS_VOSTGOAV} implies that (under hypotheses \ref{i:vis-crit-good} and \ref{i:vis-crit-ram} of \Cref{thm:vis-crit}) the map $Z(K_v^{\text{nr}}) \to B'(K_v^{\text{nr}})$ is surjective. But then the image of $\pi_v$ is contained in the unramified subgroup of $H^1(K_v, A)$, and the claim follows from \Cref{lemma:what-are-the-cp} and the assumption that $d$ is coprime to $c_v(A)$.

Now suppose that $v \nmid d$. In this case it is shown in \cite[Section~3.4, Case 2]{AS_VOSTGOAV} that the map $\psi : B^0(K_v^{\text{nr}}) \to B'^0(K_v^{\text{nr}})$ is surjective.

For $P \in B'(K_v)$ we have $c_{v}(B') \cdot P \in B'^0(K_v)$ since $c_v(B')$ is the order of the component group of the Néron model of $B'$ at $v$. Therefore there exists $Q \in B^0(K_v^{\text{nr}})$ such that $\psi(Q) = c_v(B') \cdot P$.

Thus $c_v(B') \cdot \pi_v(P) = \pi_v( c_v(B') \cdot P)$ is in the same cohomology class as the cocycle $\sigma \mapsto Q^{\sigma}-Q$, and in particular $c_v(B') \cdot \pi_v(P)$ is unramified. But by \Cref{lemma:what-are-the-cp} this implies that 
\begin{equation*}
    c_v(A) c_v(B') \cdot \pi_v(P) = 0 .
\end{equation*}
Combining this with the fact that $d \cdot \pi_v = 0$ and the hypothesis that $c_v(A) c_v(B')$ is coprime to $d$ we deduce $\pi_v(P) = 0$, as required.
\end{proof}

\section{Tables}
\label{sec:tables}

\subsection*{How to read the tables}
\label{sec:how-read-tables}

\subsubsection*{\Cref{tab:mf-congs}}
We list all weight $2$ newforms $f \in S_2^{\textnormal{new}}(\Gamma_0(N))$ and $g \in S_2^{\textnormal{new}}(\Gamma_0(M))$ in the LMFDB  (i.e., $N,M \leq 10\,000$) with coefficient fields of degree $\leq 2$ which are $p$-congruent modulo an unramified prime where $p \geq 23$. Note the existence of the triple $\LMFDBLabelMF{4725.2.a.bb}, \LMFDBLabelMF{4725.2.a.bc}, \LMFDBLabelMF{4725.2.a.x}$ of $29$-congruent forms. The corresponding (isomorphic) mod $p$ Galois representations are irreducible in every example in \Cref{tab:mf-congs}.

\subsubsection*{\Cref{tab:full-examples}}
We list certain examples of abelian varieties $A/\QQ$ and $B/\QQ$ with real multiplication by an order in the respective fields $K_A$ (resp. $K_B$) such that a subgroup $(\ZZ/p\ZZ)^2 \subset \Sha(A/\QQ)[\fp]$ is made visible by a $(\fp,\fq)$-congruence between $A$ and $B$ for some prime ideals $\fp \subset \cO_A$ and $\fq \subset \cO_B$. For each $J = A$ or $B$ we record:
\begin{itemize}[itemsep=1mm,leftmargin=7mm]
    \item The label of the (Galois orbit of) weight $2$ newforms $f_J \in S_2^{\textnormal{new}}(\Gamma_0(N))$ associated\footnote{This association is conditional if the corresponding entry in \Cref{tab:weier-eqns} is marked with a $(\star)$, see \Cref{lemma:correct-eqns-forms}. In every case $A$ and $B$ are $(\fp,\fq)$-congruent by \Cref{thm:congruence-classification-av}.} to the isogeny class of $J$.
    \begin{itemize}
        \item If $\deg(K_J) = 1$ the elliptic curve $J$ is chosen to be $\Gamma_0(N)$-optimal, and
        \item If $\deg(K_J) = 2$ the abelian surface $J$ is chosen to be the Jacobian of the corresponding genus $2$ curve listed in \Cref{tab:weier-eqns}. 
    \end{itemize}
    \item The (algebraic) rank of $J/\QQ$.
    \item An integer $\mathfrak{c}(J)$ such that the product of Tamagawa numbers $\prod_\ell c_\ell(J)$ divides $\mathfrak{c}(J)$ (when $J$ is an elliptic curve $\mathfrak{c}(J) = c(J) = \prod_\ell c_\ell(J)$). The term $\mathcal{P}$ denotes the product $\mathcal{P} = \prod_{\ell^2 \mid N} c_\ell(J)$ (by \cite[Theorem~1.13]{LO_AVHPAR} $\mathcal{P}$ is supported on primes $q$ for which $q^2 \mid N$ or $q \leq 2 \dim(J) + 1$). 
\end{itemize}
 
Cases where the Tamagawa product was computed using the unpublished work \cite{DJ_RMOC} are marked with a dagger $(\dagger)$. Cases in which we could not compute a Tamagawa number (and are thus conditional) are marked with an asterisk $(*)$. In the example \LMFDBLabelMF{9802.2.a.j} we failed to compute the order of the component group at $2$. However, since $2 \parallel 9802$ the algorithm of Kohel--Stein~\cite{KS_CGOQOJ0} (the \textsc{Magma} function \texttt{ComponentGroupOrder}) can be applied in this case. Unfortunately, we were unable to obtain a result after several hours.

\subsubsection*{\Cref{tab:weier-eqns}}
We give explicit Weierstrass equations for the genus $2$ curves whose Jacobians have a $p$-torsion element in $\Sha(J/\QQ)$ by \Cref{thm:examples}. Note that this relies heavily on the work \cite{CEHJMPV_ADOCWMJ,CEHJMPV_ADOCWMJ_electronic} in preparation where the authors construct RM abelian whose associated newforms can be found in the LMFDB. We prove in \Cref{lemma:correct-eqns-forms} that the (isogeny class of) abelian varieties corresponds to the newforms, those cases which we could not prove rigorously are marked with a star $(\star)$.
\FloatBarrier 

\vspace*{\fill}
\begingroup
\small
\begin{table}[H]
  \centering
  \begin{tabular}{c|cc||c|cc}
    $p$ & Label of $f$                        & Label of $g$                        &  $p$ & Label of $f$                        & Label of $g$                        \\
    \hline
    23  & \LMFDBLabelMF{1755.2.a.g}  & \LMFDBLabelMF{1755.2.a.l}  & 23   & \LMFDBLabelMF{8820.2.a.bd} & \LMFDBLabelMF{8820.2.a.bg} \\
    23  & \LMFDBLabelMF{1755.2.a.h}  & \LMFDBLabelMF{1755.2.a.k}  & 23   & \LMFDBLabelMF{8820.2.a.bi} & \LMFDBLabelMF{8820.2.a.bl} \\
    23  & \LMFDBLabelMF{2205.2.a.ba} & \LMFDBLabelMF{2205.2.a.q}  & 29   & \LMFDBLabelMF{2178.2.a.bb} & \LMFDBLabelMF{2178.2.a.x}  \\
    23  & \LMFDBLabelMF{2205.2.a.n}  & \LMFDBLabelMF{2205.2.a.z}  & 29   & \LMFDBLabelMF{2178.2.a.p}  & \LMFDBLabelMF{2178.2.a.t}  \\
    23  & \LMFDBLabelMF{3332.2.a.h}  & \LMFDBLabelMF{3332.2.a.k}  & 29   & \LMFDBLabelMF{2250.2.a.c}  & \LMFDBLabelMF{2250.2.a.f}  \\
    23  & \LMFDBLabelMF{4510.2.a.o}  & \LMFDBLabelMF{4510.2.a.q}  & 29   & \LMFDBLabelMF{2250.2.a.m}  & \LMFDBLabelMF{2250.2.a.n}  \\
    23  & \LMFDBLabelMF{5320.2.a.i}  & \LMFDBLabelMF{5320.2.a.l}  & 29   & \LMFDBLabelMF{2394.2.a.w}  & \LMFDBLabelMF{2394.2.a.z}  \\
    23  & \LMFDBLabelMF{6358.2.a.a}  & \LMFDBLabelMF{6358.2.a.k}  & 29   & \LMFDBLabelMF{3249.2.a.i}  & \LMFDBLabelMF{3249.2.a.j}  \\
    23  & \LMFDBLabelMF{6358.2.a.g}  & \LMFDBLabelMF{6358.2.a.j}  & 29   & \LMFDBLabelMF{3249.2.a.o}  & \LMFDBLabelMF{3249.2.a.p}  \\
    23  & \LMFDBLabelMF{6650.2.a.bu} & \LMFDBLabelMF{6650.2.a.bx} & 29   & \LMFDBLabelMF{4356.2.a.r}  & \LMFDBLabelMF{4356.2.a.t}  \\
    23  & \LMFDBLabelMF{6942.2.a.q}  & \LMFDBLabelMF{6942.2.a.s}  & 29   & \LMFDBLabelMF{4356.2.a.u}  & \LMFDBLabelMF{4356.2.a.w}  \\
    23  & \LMFDBLabelMF{7050.2.a.bu} & \LMFDBLabelMF{7050.2.a.bv} & 29   & \LMFDBLabelMF{4725.2.a.bb} & \LMFDBLabelMF{4725.2.a.bc} \\
    23  & \LMFDBLabelMF{7425.2.a.bd} & \LMFDBLabelMF{7425.2.a.bl} & 29   & \LMFDBLabelMF{4725.2.a.bb} & \LMFDBLabelMF{4725.2.a.x}  \\
    23  & \LMFDBLabelMF{7425.2.a.bi} & \LMFDBLabelMF{7425.2.a.z}  & 29   & \LMFDBLabelMF{5418.2.a.bb} & \LMFDBLabelMF{5418.2.a.x}  \\
    23  & \LMFDBLabelMF{7560.2.a.bb} & \LMFDBLabelMF{7560.2.a.x}  & 29   & \LMFDBLabelMF{5742.2.a.bk} & \LMFDBLabelMF{5742.2.a.bl} \\
    23  & \LMFDBLabelMF{7560.2.a.bg} & \LMFDBLabelMF{7560.2.a.bj} & 29   & \LMFDBLabelMF{6426.2.a.be} & \LMFDBLabelMF{6426.2.a.bm} \\
    23  & \LMFDBLabelMF{7650.2.a.c}  & \LMFDBLabelMF{7650.2.a.cw} & 29   & \LMFDBLabelMF{6426.2.a.bn} & \LMFDBLabelMF{6426.2.a.bv} \\
    23  & \LMFDBLabelMF{7650.2.a.cj} & \LMFDBLabelMF{7650.2.a.dc} & 29   & \LMFDBLabelMF{6890.2.a.q}  & \LMFDBLabelMF{6890.2.a.r}  \\
    23  & \LMFDBLabelMF{8085.2.a.ba} & \LMFDBLabelMF{8085.2.a.bh} & 31   & \LMFDBLabelMF{6650.2.a.bq} & \LMFDBLabelMF{6650.2.a.bv} \\
    23  & \LMFDBLabelMF{8775.2.a.be} & \LMFDBLabelMF{8775.2.a.t}  & 59   & \LMFDBLabelMF{4332.2.a.e}  & \LMFDBLabelMF{4332.2.a.f}  \\
    23  & \LMFDBLabelMF{8775.2.a.bf} & \LMFDBLabelMF{8775.2.a.s}  & 59   & \LMFDBLabelMF{4332.2.a.j}  & \LMFDBLabelMF{4332.2.a.k}  \\
  \end{tabular}
  \caption{All pairs $(f,g)$ of weight $2$ newforms in the LMFDB whose coefficient fields have degree $\leq 2$ and which are $p$-congruent modulo an unramified prime where $23 \leq p \leq \bigp$. For every form in this table the corresponding mod $p$ Galois representation is irreducible.}
  \label{tab:mf-congs}
\end{table}
\endgroup
\vspace*{\fill}
\clearpage

\FloatBarrier
\begingroup
\small
\begin{table}[p]
  \centering
\begin{tabular}{lll|ccc|ccc}
    $p$          & Label of $f_A$                        & Label of $f_B$                       & $K_A$            & $\rank(A)$ & $\mathfrak{c}(A)$       & $K_B$           & $\rank(B)$ & $\mathfrak{c}(B)$      \\
  \hline
  $5$          & \LMFDBLabelMF{1058.2.a.e}  & \LMFDBLabelMF{1058.2.a.a} & $\QQ$            & $0$        & $1$                     & $\QQ$           & $2$        & $2$                    \\
  $5$          & \LMFDBLabelMF{1246.2.a.g}  & \LMFDBLabelMF{1246.2.a.a} & $\QQ$            & $0$        & $4$                     & $\QQ$           & $2$        & $4$                    \\
  $5$          & \LMFDBLabelMF{1664.2.a.r}  & \LMFDBLabelMF{1664.2.a.a} & $\QQ$            & $0$        & $2$                     & $\QQ$           & $2$        & $4$                    \\
  $5$          & \LMFDBLabelMF{2366.2.a.g}  & \LMFDBLabelMF{2366.2.a.a} & $\QQ$            & $0$        & $1$                     & $\QQ$           & $2$        & $12$                   \\
  $5$          & \LMFDBLabelMF{2574.2.a.j}  & \LMFDBLabelMF{2574.2.a.a} & $\QQ$            & $0$        & $2$                     & $\QQ$           & $2$        & $8$                    \\
  $5$  & \LMFDBLabelMF{2736.2.a.bb}$(\dagger)$ & \LMFDBLabelMF{2736.2.a.a} & $\QQ(\sqrt{41})$ & $0$        & $256$                   & $\QQ$           & $2$        & $16$                   \\
  $5$          & \LMFDBLabelMF{2834.2.a.e}  & \LMFDBLabelMF{2834.2.a.c} & $\QQ$            & $0$        & $1$                     & $\QQ$           & $2$        & $24$                   \\
  $5$          & \LMFDBLabelMF{3384.2.a.f}  & \LMFDBLabelMF{3384.2.a.a} & $\QQ$            & $0$        & $8$                     & $\QQ$           & $2$        & $16$                   \\
  $5$          & \LMFDBLabelMF{3952.2.a.g}  & \LMFDBLabelMF{3952.2.a.d} & $\QQ$            & $0$        & $1$                     & $\QQ$           & $2$        & $4$                    \\
  $5$          & \LMFDBLabelMF{4092.2.a.d}  & \LMFDBLabelMF{4092.2.a.a} & $\QQ$            & $0$        & $4$                     & $\QQ$           & $2$        & $12$                   \\
  $5$          & \LMFDBLabelMF{4592.2.a.k}  & \LMFDBLabelMF{4592.2.a.a} & $\QQ$            & $0$        & $8$                     & $\QQ$           & $2$        & $4$                    \\
  $5$          & \LMFDBLabelMF{4968.2.a.bf} & \LMFDBLabelMF{4968.2.a.a} & $\QQ(\sqrt{41})$ & $0$        & $72$                    & $\QQ$           & $2$        & $12$                   \\
  $5$          & \LMFDBLabelMF{5056.2.a.ba} & \LMFDBLabelMF{5056.2.a.d} & $\QQ(\sqrt{6})$  & $0$        & $32$                    & $\QQ$           & $2$        & $4$                    \\
  $5$          & \LMFDBLabelMF{5082.2.a.h}  & \LMFDBLabelMF{5082.2.a.c} & $\QQ$            & $0$        & $1$                     & $\QQ$           & $2$        & $24$                   \\
  $5$  & \LMFDBLabelMF{5221.2.a.b}$(\dagger)$  & \LMFDBLabelMF{5221.2.a.a} & $\QQ(\sqrt{29})$ & $0$        & $7$                     & $\QQ$           & $2$        & $1$                    \\
  $5$          & \LMFDBLabelMF{5586.2.a.d}  & \LMFDBLabelMF{5586.2.a.c} & $\QQ$            & $0$        & $2$                     & $\QQ$           & $2$        & $4$                    \\
  $5$          & \LMFDBLabelMF{5904.2.a.o}  & \LMFDBLabelMF{5904.2.a.b} & $\QQ$            & $0$        & $8$                     & $\QQ$           & $2$        & $16$                   \\
  $5$      & \LMFDBLabelMF{6864.2.a.bf}$(*)$ & \LMFDBLabelMF{6864.2.a.a} & $\QQ(\sqrt{41})$ & $0$        & $\boldsymbol{?}$        & $\QQ$           & $2$        & $8$                    \\
  $5$          & \LMFDBLabelMF{7014.2.a.k}  & \LMFDBLabelMF{7014.2.a.g} & $\QQ$            & $0$        & $1$                     & $\QQ$           & $2$        & $24$                   \\
  $5$          & \LMFDBLabelMF{7084.2.a.j}  & \LMFDBLabelMF{7084.2.a.a} & $\QQ$            & $0$        & $1$                     & $\QQ$           & $2$        & $24$                   \\
  $5$      & \LMFDBLabelMF{7632.2.a.v}$(*)$  & \LMFDBLabelMF{7632.2.a.h} & $\QQ(\sqrt{41})$ & $0$        & $\boldsymbol{?}$        & $\QQ$           & $2$        & $16$                   \\
  $5$          & \LMFDBLabelMF{8526.2.a.f}  & \LMFDBLabelMF{8526.2.a.a} & $\QQ$            & $0$        & $1$                     & $\QQ$           & $2$        & $8$                    \\
  $5$          & \LMFDBLabelMF{8568.2.a.j}  & \LMFDBLabelMF{8568.2.a.a} & $\QQ$            & $0$        & $8$                     & $\QQ$           & $2$        & $32$                   \\
  $5$          & \LMFDBLabelMF{8784.2.a.bj} & \LMFDBLabelMF{8784.2.a.a} & $\QQ(\sqrt{6})$  & $0$        & $384$                   & $\QQ$           & $2$        & $16$                   \\
  $5$          & \LMFDBLabelMF{8866.2.a.g}  & \LMFDBLabelMF{8866.2.a.b} & $\QQ$            & $0$        & $4$                     & $\QQ$           & $2$        & $6$                    \\
  $5$      & \LMFDBLabelMF{9802.2.a.j}$(*)$  & \LMFDBLabelMF{9802.2.a.b} & $\QQ(\sqrt{29})$ & $0$        & $\boldsymbol{?}$        & $\QQ$           & $2$        & $4$                    \\
  $5$          & \LMFDBLabelMF{9936.2.a.bp} & \LMFDBLabelMF{9936.2.a.d} & $\QQ$            & $0$        & $2$                     & $\QQ$           & $2$        & $4$                    \\
  $7$          & \LMFDBLabelMF{3200.2.a.bm} & \LMFDBLabelMF{3200.2.a.a} & $\QQ(\sqrt{2})$  & $0$        & $\mathcal{P}$           & $\QQ$           & $2$        & $12$                   \\
  $7$          & \LMFDBLabelMF{5220.2.a.r}  & \LMFDBLabelMF{5220.2.a.a} & $\QQ(\sqrt{57})$ & $0$        & $\mathcal{P} \cdot 6$   & $\QQ$           & $2$        & $24$                   \\
  $7$          & \LMFDBLabelMF{8464.2.a.bg} & \LMFDBLabelMF{8464.2.a.d} & $\QQ(\sqrt{2})$  & $0$        & $\mathcal{P}$           & $\QQ$           & $2$        & $6$                    \\
  $7$          & \LMFDBLabelMF{8475.2.a.r}  & \LMFDBLabelMF{8475.2.a.a} & $\QQ(\sqrt{2})$  & $0$        & $\mathcal{P}$           & $\QQ$           & $2$        & $36$                   \\
  $7$          & \LMFDBLabelMF{8976.2.a.bm} & \LMFDBLabelMF{8976.2.a.a} & $\QQ(\sqrt{2})$  & $0$        & $\mathcal{P} \cdot 146$ & $\QQ$           & $2$        & $4$                    \\
  $7$  & \LMFDBLabelMF{9510.2.a.f}  & \LMFDBLabelMF{9510.2.a.x}$(\dagger)$ & $\QQ$         & $0$        & $6$                     & $\QQ(\sqrt{2})$ & $4$        & $544$                  \\
  $7$          & \LMFDBLabelMF{9603.2.a.o}  & \LMFDBLabelMF{9603.2.a.m} & $\QQ(\sqrt{2})$  & $0$        & $\mathcal{P} \cdot 71$  & $\QQ(\sqrt{2})$ & $4$        & $\mathcal{P} \cdot 2$  \\
  $11$ & \LMFDBLabelMF{6962.2.a.q}  & \LMFDBLabelMF{6962.2.a.o}$(\dagger)$ & $\QQ(\sqrt{5})$  & $0$        & $\mathcal{P} \cdot 41$  & $\QQ(\sqrt{5})$ & $4$        & $\mathcal{P} \cdot 76$ \\
  $11$         & \LMFDBLabelMF{9025.2.a.r}  & \LMFDBLabelMF{9025.2.a.n} & $\QQ(\sqrt{5})$  & $0$        & $\mathcal{P} \cdot 1$   & $\QQ(\sqrt{5})$ & $4$        & $\mathcal{P} \cdot 1$  \\
  $13$         & \LMFDBLabelMF{6776.2.a.r}  & \LMFDBLabelMF{6776.2.a.b} & $\QQ(\sqrt{3})$  & $0$        & $\mathcal{P} \cdot 3$   & $\QQ$           & $2$        & $32$                   \\
\end{tabular}
  \caption{Certain abelian varieties $A/\QQ$ and $B/\QQ$ with real multiplication by an order in the field $K_A$ (resp. $K_B$) and such that a subgroup $(\ZZ/p\ZZ)^2 \subset \Sha(A/\QQ)[\fp]$ is made visible by a $(\fp,\fq)$-congruence between $A$ and $B$. For each $J = A$ or $B$ we record the label of $f_J \in S_2^{\textnormal{new}}(\Gamma_0(N))$. If $\deg(K_J) = 1$ then $J$ is chosen to be $\Gamma_0(N)$-optimal and if $\deg(K_J) = 2$ then $J$ is the Jacobian of the corresponding genus $2$ curve listed in \Cref{tab:weier-eqns}.} \label{tab:full-examples}
\end{table}
\endgroup

\begingroup
\small
\clearpage
\begin{landscape}
  \pagestyle{plain}
  \begin{table}
    \centering
    \begin{tabular}{l|c|c}
      Label of $f$ &  $K_f$                   & Weierstrass equation                                                                                                                     \\
      \hline
      \LMFDBLabelMF{2736.2.a.bb} & $\QQ(\sqrt{41})$ & $y^2 = -168 x^{6} + 828 x^{5} - 282 x^{4} + 633 x^{3} - 3189 x^{2} - 1629 x - 2997$                                                      \\
      \LMFDBLabelMF{3200.2.a.bm} & $\QQ(\sqrt{2})$ & $y^2 = -520 x^{6} + 680 x^{5} - 60 x^{4} - 240 x^{3} + 80 x^{2} + 20 x - 10$                                                             \\
      \LMFDBLabelMF{4968.2.a.bf} & $\QQ(\sqrt{41})$ & $y^2 = -408 x^{6} + 3690 x^{5} - 15084 x^{4} + 34470 x^{3} - 45351 x^{2} + 31176 x - 7368$                                               \\
      \LMFDBLabelMF{5056.2.a.ba} & $\QQ(\sqrt{6})$ & $y^2 = 3608 x^{6} - 47912 x^{5} + 225314 x^{4} - 473684 x^{3} + 419938 x^{2} - 97768 x + 6504$                                           \\
      \LMFDBLabelMF{5220.2.a.r}  & $\QQ(\sqrt{57})$ & $y^2 = 34992 x^{5} + 1939380 x^{4} + 42861081 x^{3} + 472182963 x^{2} + 2593258155 x + 5680766145$                                       \\
      \LMFDBLabelMF{5221.2.a.b}  & $\QQ(\sqrt{29})$ & $y^2 = 16 x^{5} + 1576 x^{4} + 58697 x^{3} + 1072475 x^{2} + 10723808 x + 45456220$                                                      \\
      \LMFDBLabelMF{6776.2.a.r}  & $\QQ(\sqrt{3})$ & $y^2 = -100936 x^{6} - 375441 x^{5} + 574585 x^{4} + 2425533 x^{3} - 3932753 x^{2} + 1298528 x - 219604$                                 \\
      \LMFDBLabelMF{6864.2.a.bf} & $\QQ(\sqrt{41})$ & $y^2 = 16 x^{5} + 504 x^{4} -10815 x^{3} -115171 x^{2} + 1444344x^{1} -14672700$                                                         \\
      \LMFDBLabelMF{6962.2.a.o}$(\star)$  & $\QQ(\sqrt{5})$ & $y^2 - (x^{2} + x) y = -16x^{5} - 21x^{4} + 34x^{3} + 155x^{2} + 176x + 64$                                                              \\
      \LMFDBLabelMF{6962.2.a.q}$(\star)$  & $\QQ(\sqrt{5})$ & $y^2 = -236 x^{6} - 1652 x^{5} - 295 x^{4} + 10502 x^{3} - 4071 x^{2} - 4484 x - 708$                                                    \\
      \LMFDBLabelMF{7632.2.a.v} & $\QQ(\sqrt{41})$ & $y^2 = -1295760 x^{6} - 16237368 x^{5} - 82883841 x^{4} - 222068034 x^{3} - 330803205 x^{2} - 260544348 x - 84941220$                    \\
      \LMFDBLabelMF{8464.2.a.bg} & $\QQ(\sqrt{2})$ & $y^2 = -5152 x^{6} + 47012 x^{5} - 166313 x^{4} + 259532 x^{3} - 116403 x^{2} - 113344 x + 99659$                                        \\
      \LMFDBLabelMF{8475.2.a.r}  & $\QQ(\sqrt{2})$ & $y^2 - (x^{3} + 1)y = 3056991 x^{6} + 46889085 x^{5} + 299096355 x^{4} + 1015447247 x^{3} + 1934906580 x^{2} + 1961596845 x + 826414211$ \\
      \LMFDBLabelMF{8784.2.a.bj} & $\QQ(\sqrt{6})$ & $y^2 = -36 x^{5} + 15 x^{4} - 231 x^{3} + 177 x^{2} + 81 x - 63$                                                                         \\
      \LMFDBLabelMF{8976.2.a.bm} & $\QQ(\sqrt{2})$ & $y^2 = -207558 x^{6} - 394134 x^{5} - 320476 x^{4} - 182650 x^{3} - 75478 x^{2} - 14976 x - 1053$                                        \\
      \LMFDBLabelMF{9025.2.a.n}  & $\QQ(\sqrt{5})$ & $y^2 - (x^{3} + x^{2} + 1)y = 756x^{6} - 2583x^{5} + 3676x^{4} - 2773x^{3} + 1157x^{2} - 250x + 21$                                      \\
      \LMFDBLabelMF{9025.2.a.r}  & $\QQ(\sqrt{5})$ & $y^2 - (x^{2} + x)y = 95 x^{6} + 166 x^{4} + 617 x^{3} + 641 x^{2} + 760 x + 285$                                                        \\
      \LMFDBLabelMF{9510.2.a.x}$(\star)$ & $\QQ(\sqrt{2})$  & $y^2 - (x + 1)y = 3600x^{6} - 10200x^{5} + 11189x^{4} - 5864x^{3} + 1447x^{2} - 143x + 6$                                                \\
      \LMFDBLabelMF{9603.2.a.m}  & $\QQ(\sqrt{2})$ & $y^2 - (x^3 + 1)y = 2210x^{6} - 6246x^{5} + 7230x^{4} - 4406x^{3} + 1497x^{2} - 270x + 20$                                               \\
      \LMFDBLabelMF{9603.2.a.o}  & $\QQ(\sqrt{2})$ & $y^2 = -15 x^{6} + 66 x^{5} - 1113 x^{4} + 7086 x^{3} - 26139 x^{2} + 91284 x - 151440$                                                  \\
      \LMFDBLabelMF{9802.2.a.j}  & $\QQ(\sqrt{29})$ & $y^2 = 2704x^{5} - 1352x^{4} - 22399x^{3} - 10582x^{2} + 69017x + 52468$                                                                 \\
    \end{tabular}
    \caption{Weierstrass equations of curves whose Jacobians $J/\QQ$ have associated (Galois orbits of) newforms $f \in S_2^{\textnormal{new}}(\Gamma_0(N))$ with LMFDB label given by the first column. In those cases marked with a $(\star)$ we were unable to rigorously verify that $f$ and $J$ correspond, as discussed in \Cref{lemma:correct-eqns-forms}.}
    \label{tab:weier-eqns}
  \end{table}
\end{landscape}
\endgroup

\FloatBarrier
\newcommand{\etalchar}[1]{$^{#1}$}
\providecommand{\bysame}{\leavevmode\hbox to3em{\hrulefill}\thinspace}
\providecommand{\MR}{\relax\ifhmode\unskip\space\fi MR }
\providecommand{\MRhref}[2]{%
  \href{http://www.ams.org/mathscinet-getitem?mr=#1}{#2}
}
\providecommand{\bibtitleref}[2]{%
  \hypersetup{urlbordercolor=0.8 1 1}%
  \href{#1}{#2}%
  \hypersetup{urlbordercolor=cyan}%
}
\providecommand{\href}[2]{#2}

\end{document}